\documentclass[11pt]{article}

\usepackage{amsmath, amsthm, amssymb}
\usepackage{enumitem}
\usepackage{mathtools}
\usepackage{etoolbox}
\usepackage{xcolor}
\usepackage[noadjust]{cite}
\usepackage{authblk}
\usepackage{nccmath}
\usepackage[normalem]{ulem}

\theoremstyle{plain}
\newtheorem{theorem}{Theorem}[section]
\newtheorem{definition}[theorem]{Definition}

\newtheorem{proposition}[theorem]{Proposition}
\newtheorem{corollary}[theorem]{Corollary}

\theoremstyle{definition}
\newtheorem{example}[theorem]{Example}
\newtheorem*{remark}{Remark}

\setlist{noitemsep, topsep=1ex, parsep=1ex, partopsep=1ex}

\allowdisplaybreaks
\appto\normalsize{
	\abovedisplayskip=2ex plus 1ex minus 1ex
	\belowdisplayskip=2ex plus 1ex minus 1ex
	\abovedisplayshortskip=2ex plus 1ex minus 1ex
	\belowdisplayshortskip=2ex plus 1ex minus 1ex}
\appto\small{
	\abovedisplayskip=2ex plus 1ex minus 1ex
	\belowdisplayskip=2ex plus 1ex minus 1ex
	\abovedisplayshortskip=2ex plus 1ex minus 1ex
	\belowdisplayshortskip=2ex plus 1ex minus 1ex}

\AtBeginDocument{%
  \mathchardef\mathcomma\mathcode`\,
  \mathcode`\,="8000 
}
{\catcode`,=\active
  \gdef,{\mathcomma\discretionary{}{}{}}
}

\newcommand{\gap}{\vspace{1ex}}
\newcommand{\diag}{\operatorname{Diag}}
\newcommand{\tr}{\operatorname{tr}}
\newcommand{\trace}{\operatorname{trace}}
\newcommand{\Aut}{\operatorname{Aut}}
\newcommand{\rank}{\operatorname{rank}}
\newcommand{\R}{\mathcal{R}}
\newcommand{\Rn}{\mathcal{R}^n}
\newcommand{\Sn}{\mathcal{S}^n}
\newcommand{\Hn}{\mathcal{H}^n}
\newcommand{\V}{\mathcal{V}}
\newcommand{\W}{\mathcal{W}}
\newcommand{\one}{\mathbf{1}}
\newcommand{\lplus}{\Lambda_{+}}
\newcommand{\bfA}{\mathbf{A}}
\newcommand{\bfB}{\mathbf{B}}

\newcommand{\norm}[1]{\left\Vert #1 \right\Vert}
\newcommand{\ip}[2]{\left< #1, #2 \right>}
\newcommand{\set}[2]{\left\lbrace #1 : #2 \right\rbrace}

\title{Minimal polynomials, scaled Jordan frames, and Schur-type majorization  in hyperbolic systems}

\author[1]{M. Seetharama Gowda}
\author[2]{Juyoung Jeong} 
\author[3]{Sudheer Shukla}

\affil[1]{\small Department of Mathematics and Statistics\\University of Maryland Baltimore County\\Baltimore, Maryland 21250, United States}
\affil[2]{\small Department of Mathematics\\Soongsil University\\Seoul 06978, South Korea}
\affil[3]{\small Department of Mathematics\\University of Maryland, College Park, Maryland 20742, United States}

\date{\today}

\begin{document}

\maketitle

\begin{abstract}
    Corresponding to a hyperbolic system $(\V, p, e)$, where $\V$ is a real finite-dimensional vector space and $p$ is a hyperbolic polynomial of degree $n$ in the direction $e$ \cite{garding1959inequality, bauschke2001hyperbolic}, we consider the eigenvalue map $\lambda: \V \to \Rn$ and the hyperbolicity cone $\lplus$. In such a system, a scaled Jordan frame is defined as a finite set of rank-one elements whose sum lies in the interior of $\lplus$. We show that when the system has a scaled Jordan frame and $n\geq 2$, $p$ and its derivative polynomial $p^\prime$ are minimal polynomials (generating their respective hyperbolicity cones), thereby extending a result of  Ito and Louren{\c c}o \cite{ito2023automorphisms} proved in the setting of a rank-one generated (proper) hyperbolicity cone.  When each element of a scaled Jordan frame has trace one and the total sum is $e$  (such a set is called a Jordan frame), we show that the frame is orthonormal relative to the semi-inner product induced by $\lambda$ with exactly $n$ elements, and $\V$ contains a copy of $\Rn$ (as a Euclidean Jordan algebra). We also present a Schur-type majorization result corresponding to a Jordan frame and an $e$-doubly stochastic $n$-tuple.
\end{abstract}

\vspace{2ex}

\textbf{Key Words:} hyperbolic system, primitive idempotent, scaled Jordan frame, doubly stochastic transformation, Schur-type majorization.

\textbf{AMS 2020 Subject Classification:} 17C20, 17C27, 15A42, 15B51.

\section{Introduction} 

Hyperbolic polynomials/systems appear in various branches of mathematics, including optimization, algebraic geometry, combinatorics, etc.; see \cite{garding1959inequality, bauschke2001hyperbolic, renegar2006hyperbolic, ito2023automorphisms, gurvits2004combinatorics}.  Motivated by the work of Ito and Louren{\c c}o \cite{ito2023automorphisms}, in this article, we introduce the concept of a scaled Jordan frame and describe results that address the minimality of the underlying polynomial and Schur-type majorization. We begin with a brief review.

\gap
Consider a finite-dimensional real vector space $\V$ and a nonzero element $e \in \V$. A homogeneous polynomial $p$ of degree $n$ ($\geq 1$) on $\V$ is said to be \emph{hyperbolic in the direction of $e$} (or relative to $e$) \cite{garding1959inequality} if $p(e) \neq 0$ and for each $x \in \V$, the univariate polynomial $t \mapsto p(te-x)$ has (only) real zeros. The triple $(\V, p, e)$ will then be called a \emph{hyperbolic system of degree} $n$. Now, in a hyperbolic system $(\V, p, e)$, we consider $x\in \V$ and arrange the $n$ roots of $p(te-x) = 0$ in decreasing order to create the vector $\lambda(x) \coloneq \big (\lambda_1(x),\lambda_2(x),\ldots, \lambda_n(x)\big)^T$ in $\Rn$, the entries of which are called the \emph{eigenvalues} of $x$. This \emph{eigenvalue map} $\lambda:\V \to \Rn$  induces a semi-inner product on $\V$  \cite{bauschke2001hyperbolic}; see Proposition \ref{bauschke2001hyperbolic} below. In this setting, the \emph{hyperbolicity cone} $\lplus$ and the \emph{derivative polynomial} $p^\prime$ \cite{bauschke2001hyperbolic, renegar2006hyperbolic} are defined by 
\begin{equation} \label{derivative polynomial}
	\lplus \coloneq \set{x \in \V}{\lambda(x) \geq 0} \quad \text{and} \quad p^\prime(x) \coloneq \frac{d}{dt}\,p(te+x) \Big\vert_{t=0}. 
\end{equation}

Given a hyperbolic system $(\V,p,e)$ and its hyperbolicity cone $\lplus$, a homogeneous polynomial $q$ on $\V$ is said to be minimal if $q$ has the lowest degree among all hyperbolic polynomials on $\V$ that induce $\lplus$ in direction $e$. An important result due to Helton and Vinnikov \cite[Lemma 2.1]{helton2007lmi}  says that such a minimal polynomial exists, unique up to a positive constant, and divides any polynomial that induces $\lplus.$ Given a hyperbolic system $(\V,p,e)$, the question of when $p$ is minimal becomes interesting. In a recent paper, Ito and Louren{\c c}o \cite{ito2023automorphisms} show that $p$ and $p^\prime$ are minimal when $\lplus$ is an ROG-cone (a pointed cone where every extreme direction of $\lplus$ is generated by a rank-one element, i.e., an element $x$ with $\lambda_1(x)>0$, $\lambda_2(x)=\lambda_3(x)=\cdots=\lambda_n(x)=0$).  In our paper, we weaken the ROG-cone condition by assuming the existence of a finite set of rank-one elements whose sum lies in the interior of $\lplus$. We call such a set a  `\emph{scaled Jordan frame}' and say that $(\V,p,e)$ has the `scaled Jordan frame' property. It turns out that the scaled Jordan frame property is `hereditary' (i.e., it is inherited by the derivative polynomial $p^\prime$), while for $n\geq 4$, the ROG-cone property is not.

\gap
A special case of a scaled Jordan frame in $(\V,p,e)$ is a `\emph{Jordan frame}' where each element (called a `primitive idempotent') has $1$ as the only nonzero eigenvalue, and the sum of all elements is $e$. The coinage of the term `Jordan frame' is not accidental: we show that such a finite set of primitive idempotents is orthonormal relative to the semi-inner product induced by $\lambda$ and contains exactly $n$ elements, which is the defining property of a Jordan frame in a Euclidean Jordan algebra of rank $n$. 

\gap
To motivate our Schur-type majorization results (and our discussion of primitive idempotents) in the setting of hyperbolic systems, we recall some concepts and results from the topic of Euclidean Jordan algebras. Consider a Euclidean Jordan algebra $(\V, \ip{\cdot}{\cdot}, \circ)$ of rank $n$ with unit $e$ \cite{faraut1994analysis}. In this setting, a nonzero element $c \in \V$ is said to be an idempotent if $c \circ c = c$ and a primitive idempotent if it is an idempotent but not a sum of two idempotents. A Jordan frame in $\V$ is a finite set of mutually orthogonal primitive idempotents with sum $e$. By the spectral decomposition theorem \cite{faraut1994analysis}, every element $x\in \V$ has a decomposition of the form $x = x_1e_1 + x_2e_2 + \cdots + x_ne_n$, where $\{e_1, e_2, \ldots, e_n\}$ is a Jordan frame and $x_1, x_2, \ldots, x_n$ are real numbers (called \emph{eigenvalues} of $x$) with $x_1 \geq x_2 \geq \cdots \geq x_n$. The vector $\lambda(x) \coloneq (x_1, x_2, \ldots, x_n)^T$ in $\Rn$ is called the \emph{eigenvalue vector} of $x$. Then, the eigenvalue vector of an idempotent is of the form $(1,1,\ldots, 1,0,\ldots, 0)^T$ while that of a primitive idempotent is of the form $(1,0,\ldots, 0)^T$. Using the eigenvalue map $\lambda:x\mapsto \lambda(x)$, one defines the symmetric cone and the determinant:
\[ \V_+ \coloneq \set{x \in \V}{\lambda(x) \geq 0} \quad \text{and} \quad \det(x) \coloneq x_1 x_2 \cdots x_n. \]
It is known that $(\V, p, e)$, where $p(x) = \det(x)$, becomes a hyperbolic system with $\V_+$ as its hyperbolicity cone. Now, an analog of Schur's majorization theorem for  Euclidean Jordan algebras asserts the following: Let $\{c_1, c_2, \ldots, c_n\}$ be a Jordan frame. For any $x \in \V$, let 
\begin{equation} \label{S map}
	\diag(x) \coloneq \ip{x}{c_1} c_1 + \ip{x}{c_2} c_2 + \cdots + \ip{x}{c_n} c_n
\end{equation}
(which is the `diagonal' of $x$ in the Peirce decomposition of $x$ relative to the given Jordan frame). Then, $ \lambda(\diag(x)) \prec \lambda(x)$ in $\Rn$. In the classical case, this reduces to Schur's result, which says that for a Hermitian matrix, the diagonal is majorized by the eigenvalue vector. (To see this, one observes that on the Euclidean Jordan algebra $\Hn$ of all $n \times n$ Hermitian matrices, corresponding to the canonical Jordan frame $\{E_1, E_2, \ldots, E_n\}$, the transformation $\diag(X) = \sum_{k=1}^{n} \ip{X}{E_k} E_k$ is the `diagonal' of $X$.)
This result turns out to be a particular instance of a broader result \cite[Theorem 6]{gowda2017positive}: Suppose $T$ is a  doubly stochastic linear transformation on the Euclidean Jordan algebra $\V$, which, by definition, satisfies the conditions 
\begin{equation} \label{defn of ds transformation}
	T(\V_+) \subseteq \V_+, \quad T(e) = e, \quad \text{and} \quad T^*(e) = e,
\end{equation}
where $T^*$ denotes the adjoint of $T$. Then, $\lambda(T(x)) \prec \lambda(x)$ for all $x \in \V$.  This broader result on $T$ yields much more than the analog of Schur's result mentioned above. Let $\{c_1, c_2, \ldots, c_n\}$ be a Jordan frame in $\V$ and $\bfA \coloneq \big[ a_1, a_2, \ldots, a_n \big]$ be an $n$-tuple of elements in $\V$ such that $a_i \in \V_+$, $\tr(a_i) = 1$ for all $i$, and $\sum_{i=1}^{n} a_i = e$.
Then, each of the following transformations is doubly stochastic and so has the majorization property $\lambda(T(x)) \prec \lambda(x)$:
\begin{equation} \label{trio}
	T_1(x) \coloneq \sum_{i=1}^{n} \ip{x}{a_i} c_i, \;\;
	T_2(x) \coloneq \sum_{i=1}^{n} \ip{x}{c_i} a_i, \;\; \text{and} \;\; 
	T_3(x) \coloneq \sum_{i=1}^{n} \ip{x}{a_i} a_i.
\end{equation}
Our objective in this paper is to go beyond Euclidean Jordan algebras and determine whether these concepts and results extend to the broader context of hyperbolic systems.

\gap
Here is a summary of our contributions. In the setting of a hyperbolic system $(\V, p, e)$, where $p$ has degree $n$, we introduce the concepts of a primitive element, a scaled Jordan frame, and an $e$-doubly stochastic $n$-tuple. We show that

\begin{itemize}
	\item the scaled Jordan frame property is weaker than the ROG-cone property;
	\item the scaled Jordan frame property is hereditary, while for $n\geq 4$, the ROG-cone property is not;
	\item when $n\geq 2$ and $(\V,p,e)$   has the scaled Jordan frame property, $p$ and its derivative polynomial $p^\prime$ are minimal polynomials (generating their respective hyperbolicity cones);
	\item a Jordan frame is orthonormal (relative to the semi-inner product induced by the hyperbolic structure) and contains $n$ elements;
	\item the existence of a Jordan frame implies that $\V$ contains a copy of $\Rn$ (as a Euclidean Jordan algebra);
	\item there is a Schur-type majorization result corresponding to a Jordan frame and an $e$-doubly stochastic $n$-tuple. 
\end{itemize}

\section{Preliminaries}

The Euclidean space $\Rn$ (in which elements are viewed as column vectors) always carries the usual inner product. Here, we write (the boldface) $\one$ for the vector of ones. For any index $k$, $1 \leq k \leq n$, we write $\one_k$ for the vector with exactly $k$ ones followed by $n-k$ zeros. For $u \in \Rn$, $u^\downarrow$ denotes its decreasing rearrangement vector. In $\Rn$, the (classical) majorization \cite{marshall2011inequalities} is defined as follows: Given two vectors $u, v \in \Rn$, we say that \emph{$u$ is majorized by $v$}  in $\Rn$ and write $u \prec v$ if for all $1 \leq k \leq n$, 
\[ \sum_{i=1}^{k} u_i^\downarrow \leq \sum_{i=1}^{k} v_i^\downarrow \quad \text{and} \quad \sum_{i=1}^{n} u_i^\downarrow = \sum_{i=1}^{n} v_i^\downarrow. \]
We recall a basic result \cite[Theorem II.1.10]{bhatia1997matrix}: \emph{$u \prec v$ if and only if $u$ is in the convex hull of all vectors obtained by permuting the entries of $v$.} \\ The definition of a hyperbolic system $(\V,p,e)$ was given in the Introduction. In this system, sometimes, to show the dependence of the eigenvalue map $\lambda$ on the pair $(p,e)$, we write $\lambda_{(p,e)}$ (and when $e$ is fixed, just $\lambda_p$). \emph{Henceforth, unless mentioned otherwise, we assume that a generic hyperbolic system $(\V,p,e)$ has degree $n$,} where 
\begin{center}
	\fbox{\begin{minipage}[c][6mm][c]{32mm}
			\centering
			$n = \deg(p)$.
	\end{minipage}}
\end{center}

\begin{definition}\textup{\cite{bauschke2001hyperbolic}}
	Let $(\V, p, e)$ be a hyperbolic system. We say that $p$ is \emph{complete} if 
	\[ \lambda(x) = 0 \implies x=0. \] 
	In this case, we say that $(\V, p, e)$ is a \emph{complete hyperbolic system}.
\end{definition}

We now recall some basic results on hyperbolic systems from \cite{garding1959inequality, bauschke2001hyperbolic, renegar2006hyperbolic, gurvits2004combinatorics}. Consider a hyperbolic system $(\V, p, e)$ of degree $n$. Then, the map $\lambda: \V \to \Rn$ is Lipschitz (see \cite[Corollary 3.4]{bauschke2001hyperbolic}),
\begin{equation} \label{translation property of lambda}
	\lambda(e) = \one, \quad \text{and} \quad \lambda(x+te) = \lambda(x) + t\one\quad (\forall \,x \in \V,\,t \in \R).
\end{equation}

On $\V$, corresponding to the pair $(p,e)$, the \emph{hyperbolicity cone} and its interior are defined/denoted by
\[ \lplus(p,e) \coloneq \set{x \in \V}{\lambda(x) \geq 0} \quad \text{and} \quad \Lambda_{++}(p,e) \coloneq \set{x \in \V}{\lambda(x) > 0}. \]
When the context is clear, we denote these by $\lplus$ and $\Lambda_{++}$.

\gap
It is known, see \cite{garding1959inequality}, that
\begin{itemize}
	\item \textit{$\lplus(p, e)$ is a closed convex cone,}
	\item \textit{$\lplus(p,e) = \lplus(p,d)$ for all $d\in \Lambda_{++}(p,e)$,}
	\item \textit{$\lambda(x)=0\Leftrightarrow x\in \lplus(p,e) \cap (-\lplus(p,e))$, and}
	\item \textit{$\lplus(p, e)$ is pointed, that is, $\lplus(p,e) \cap (-\lplus(p,e)) = \{0\}$ if and only if $p$ is complete.}
\end{itemize}
Recall that a closed convex cone is said to be \emph{proper} or \emph{regular} if it is pointed and has nonempty interior. Since $e \in \Lambda_{++}(p, e)$, we see that
\begin{center}
	\begin{minipage}{0.85\textwidth}
		\emph{$(\V, p, e)$ is a complete hyperbolic system if and only if $\lplus(p, e)$ is a proper/regular cone.}
	\end{minipage}
\end{center}

\gap
Suppose $(\V,p,e)$  is a hyperbolic system. Then extending the definition of derivative polynomial in (\ref{derivative polynomial}),  we let 
\[ p^{(0)}=p,\,p^{(1)}=p^\prime,\, p^{(2)} = (p^\prime)^\prime,\,\ldots, \,p^{(n-1)} = (p^{(n-2)})'. \]
To avoid trivialities, henceforth, we assume that $n$ is at least $2$.

\begin{proposition} \textup{\cite{renegar2006hyperbolic}}
	Suppose $(\V,p,e)$  is a hyperbolic system of degree $n$. Then, $(\V,p^\prime,e)$ is a hyperbolic system whose hyperbolicity cone $\lplus^\prime$ contains $\lplus$. Moreover, when $n\geq 3$, $p^\prime$ is complete if and only if $p$ is complete.
\end{proposition}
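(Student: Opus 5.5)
The plan is to work one direction $x$ at a time, using the univariate polynomial $f(t) \coloneq p(te+x)$ and its derivative in $t$. Since $p$ is homogeneous of degree $n$, for any $x\in\V$ and $t\in\R$ we have $p^\prime(te+x)=\frac{d}{ds}p(se+te+x)\big|_{s=0}=f^\prime(t)$. We also have $p(e)\neq 0$, and $f$ has degree $n$ with leading coefficient $p(e)$. Hence $p^\prime(e)=np(e)\neq 0$, and $p^\prime$ is homogeneous of degree $n-1\ge 1$. Replacing $x$ by $-x$, the polynomial $t\mapsto p(te-x)$ has only real roots $\lambda_1(x)\ge\cdots\ge\lambda_n(x)$. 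By Rolle's theorem (counting multiplicities), its derivative $t\mapsto p^\prime(te-x)$ has only real roots $\mu_1\ge\cdots\ge\mu_{n-1}$, and these interlace: $\lambda_1\ge\mu_1\ge\lambda_2\ge\cdots\ge\mu_{n-1}\ge\lambda_n$. This shows $(\V,p^\prime,e)$ is hyperbolic, with eigenvalues $\lambda_{p^\prime}(x)=(\mu_1,\ldots,\mu_{n-1})$.

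The cone inclusion then follows from interlacing. If $x\in\lplus$, then $\lambda_n(x)\ge 0$, so every $\mu_i\ge\lambda_n(x)\ge 0$, which gives $x\in\lplus^\prime$.

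For completeness, we use the fact recalled above that completeness is equivalent to $\lambda(x)=0\Rightarrow x=0$. Suppose first that $p$ is complete and $\lambda_{p^\prime}(x)=0$. Since $n\ge3$, there are at least two $\mu$'s and all of them are $0$, so $t=0$ is a root of multiplicity at least $n-1\ge 2$ of $g(t)=p(te-x)$'s derivative. Write $g(t)=c\prod(t-\lambda_i)$. The derivative of a real-rooted polynomial has a root of multiplicity $m\ge 2$ at $0$ only if $g$ has a root of multiplicity $m+1$ there. Indeed, $g^\prime$ has a multiple root only at a multiple root of $g$, because between distinct simple roots the Rolle roots are simple. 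So $g$ vanishes to order $n$ at $0$, i.e. $\lambda(x)=0$, and completeness of $p$ gives $x=0$. I expect this multiplicity argument to be the main obstacle. To make it clean I would use the formula $g^\prime/g=\sum 1/(t-\lambda_i)$: a root of $g^\prime$ strictly between consecutive distinct roots of $g$ is simple, because $(g^\prime/g)^\prime=-\sum 1/(t-\lambda_i)^2<0$ there. Conversely, suppose $p^\prime$ is complete and $\lambda(x)=0$. By interlacing, all $\mu_i=0$, so $x=0$. This direction needs only $n\ge2$, while the hypothesis $n\ge3$ is needed in the first direction: for $n=2$, $p^\prime$ is linear and may be complete only trivially. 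Finally, I would note that if $p^\prime$ is complete then so is $p$, via $\lambda=0\Rightarrow\mu=0$, and that both directions are symmetric under the pointedness characterization of completeness.
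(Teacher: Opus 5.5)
Your proof is correct and takes essentially the same route as the standard argument from Renegar, which the paper cites without giving a proof of its own. Rolle's theorem with interlacing gives the hyperbolicity of $p^\prime$ (with $p^\prime(e)=np(e)\neq 0$) and $\Lambda_{+}\subseteq\Lambda_{+}^\prime$. Your $g^\prime/g$ argument, that a multiple root of $g^\prime$ must be a root of $g$ of one higher multiplicity, is precisely the equality case of interlacing that the paper later invokes, and it yields the completeness equivalence for $n\geq 3$. One small point: your closing remark that the two directions are ``symmetric'' is superfluous and slightly misleading, since only the direction ``$p$ complete $\Rightarrow$ $p^\prime$ complete'' uses $n\geq 3$.
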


Based on the validity of the Lax conjecture, Gurvits \cite[Proposition 1.2]{gurvits2004combinatorics} states the following:

\begin{proposition} \label{gurvits result1} 
	Consider a hyperbolic system $(\V, p, e)$  of degree $n$. Given $x, y \in \V$, there exist $n \times n$ real symmetric matrices $A$ and $B$ such that
	\[ \lambda(rx + sy) = \lambda(rA + sB) \quad (\forall\, r, s \in \R). \]
\end{proposition}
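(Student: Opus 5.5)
The plan is to reduce to three variables and then invoke the Lax conjecture, in the form proved by Lewis, Parrilo and Ramana via the Helton--Vinnikov theorem. That result says: if $q(t,r,s)$ is a real homogeneous polynomial of degree $n$ on $\mathcal{R}^3$, hyperbolic with respect to $(1,0,0)$ and normalized by $q(1,0,0)=1$, then there exist real symmetric $n\times n$ matrices $A,B$ with
\[ q(t,r,s)=\det(tI+rA+sB) \quad (\forall\, t,r,s\in\R). \]
Everything else is bookkeeping to fit $p$ to this statement.

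First, fix $x,y\in\V$ and define
\[ q(t,r,s) \coloneq \frac{p(te - rx - sy)}{p(e)}. \]
This is a homogeneous polynomial of degree at most $n$ in $(t,r,s)$. Its coefficient of $t^n$ is $p(e)/p(e)=1$, so it has degree exactly $n$ and $q(1,0,0)=1$.

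Next, check hyperbolicity in direction $(1,0,0)$. For any $(t_0,r_0,s_0)$, the univariate polynomial
\[ t\mapsto q\big((t,0,0)-(t_0,r_0,s_0)\big)=\frac{p\big((t-t_0)e-(-r_0x-s_0y)\big)}{p(e)} \]
is a translate of $t\mapsto p(te-z)$ with $z=-r_0x-s_0y$. By the hyperbolicity of $p$ in direction $e$, it has only real roots. Note that no linear independence of $e,x,y$ is needed, since $q$ is defined on all of $\mathcal{R}^3$ regardless.

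Applying the Lax theorem gives symmetric $A',B'$ with $q(t,r,s)=\det(tI+rA'+sB')$. Put $A=-A'$ and $B=-B'$. Then for fixed $r,s$,
\[ p\big(te-(rx+sy)\big)=p(e)\det\big(tI-(rA+sB)\big). \]
The left side has the eigenvalues $\lambda_i(rx+sy)$ as its roots, with multiplicity. The right side has the eigenvalues of the symmetric matrix $rA+sB$ as its roots, with multiplicity. Both are arranged in decreasing order, so $\lambda(rx+sy)=\lambda(rA+sB)$ for all $r,s\in\R$.

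The only substantive step is the three-variable determinantal representation, which is deep (Helton--Vinnikov) and which I would cite rather than reprove. Beyond that, the main care needed is with the normalization and sign conventions: making sure $q(1,0,0)=1$ so the representation has leading term $tI$ rather than a scalar multiple, and flipping signs so the characteristic polynomial of $rA+sB$ appears.
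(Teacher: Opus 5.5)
Your argument is correct and follows the paper's route. The paper does not reprove this statement: it cites Gurvits, whose result rests on the Lax conjecture (Helton--Vinnikov, in the Lewis--Parrilo--Ramana form) applied to the three-variable restriction $(t,r,s)\mapsto p(te-rx-sy)/p(e)$. Your normalization, the hyperbolicity check in direction $(1,0,0)$, and the sign flip yielding $p(e)\det\big(tI-(rA+sB)\big)$ are all handled correctly.
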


Note: For a real symmetric matrix $X$, we use the symbol $\lambda(X)$ to denote its vector of eigenvalues of $X$ written in decreasing order.

\gap
The result above is significant and useful. It allows us to make an eigenvalue statement for two elements in a hyperbolic system by knowing the corresponding statement for two real symmetric matrices. As noted in \cite{gurvits2004combinatorics}, we have the following.

\begin{proposition}  \label{gurvits result2}
	In a hyperbolic system of degree $n$, the following hold: For all $x, y \in \V$,
	\begin{itemize}
		\item[$(a)$] $\lambda(x) -\lambda(y) \prec \lambda(x-y)$,
		\item[$(b)$] $\lambda(x + y) \prec \lambda(x) + \lambda(y)$, 
		\item[$(c)$] $x \leq y$ implies $\lambda(x) \leq \lambda(y)$ in $\Rn$. (Here, $x \leq y$ means $y - x \in \lplus$.)
	\end{itemize}
\end{proposition}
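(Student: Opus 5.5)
Using Proposition~\ref{gurvits result1}, I would reduce each of the three statements to the corresponding fact about real symmetric matrices. Fix $x,y\in\V$ and choose $n\times n$ real symmetric $A,B$ with $\lambda(rx+sy)=\lambda(rA+sB)$ for all $r,s\in\R$. Taking $(r,s)=(1,0),(0,1),(1,1),(1,-1)$ gives $\lambda(x)=\lambda(A)$, $\lambda(y)=\lambda(B)$, $\lambda(x+y)=\lambda(A+B)$ and $\lambda(x-y)=\lambda(A-B)$. Every eigenvalue inequality relating these four vectors then transfers verbatim from matrices to $\V$.

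For $(b)$, I would invoke Ky Fan's inequality for symmetric matrices: for each $k$,
\[ \sum_{i=1}^k \lambda_i(A+B)\le \sum_{i=1}^k\lambda_i(A)+\sum_{i=1}^k\lambda_i(B), \]
with equality when $k=n$, since both sides are traces. The vector $\lambda(A)+\lambda(B)$ is already decreasing, so its top-$k$ partial sums are exactly these right-hand sides. Hence $\lambda(A+B)\prec\lambda(A)+\lambda(B)$, which is $(b)$. For $(a)$, I would use Lidskii's theorem $\lambda(A)-\lambda(B)\prec\lambda(A-B)$, a standard result found in Bhatia. Alternatively, I could apply Ky Fan to $A=(A-B)+B$, but that yields only the weaker, unsorted form, so I would cite Lidskii directly. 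Translating back through the four identities gives $(a)$.

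For $(c)$, suppose $y-x\in\lplus$, so $\lambda(y-x)\ge 0$. With $A,B$ chosen for the pair $x,y$, we have $\lambda(B-A)=\lambda(y-x)\ge0$, so $B-A$ is positive semidefinite. Weyl's monotonicity theorem then gives $\lambda_i(A)\le\lambda_i(B)$ for every $i$, that is, $\lambda(x)\le\lambda(y)$.

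The only nontrivial ingredient is the Gurvits/Lax representation, which is available as Proposition~\ref{gurvits result1}. Beyond that, the main point needing care is Lidskii's inequality in $(a)$, since it is the deepest of the three matrix facts. Everything else amounts to checking that decreasing-order conventions match, which holds because $\lambda(\cdot)$ is defined with decreasing entries both in $\V$ and for matrices.
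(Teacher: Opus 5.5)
Your proposal is correct and matches the paper's approach. The paper gives no separate proof and cites Gurvits right after Proposition~\ref{gurvits result1}, relying on the same transfer to real symmetric matrices via Lidskii's theorem, Ky Fan's inequality, and Weyl monotonicity that you carry out, including the pair $(r,s)=(-1,1)$ for $(c)$.
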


Next, we recall the following construction of (semi-)inner product, due to Bauschke et al. \cite{bauschke2001hyperbolic}.

\begin{proposition} \label{bauschke2001hyperbolic} 
	\textup{\cite[Theorem 4.2 and Proposition 4.4]{bauschke2001hyperbolic}} 
	Consider a hyperbolic system $(\V, p, e)$ of degree $n$. Then, corresponding to $\lambda : \V \to \Rn$, 
	\begin{equation} \label{ip}
		\ip{x}{y} \coloneq \frac{1}{4} \Big[ \norm{\lambda(x+y)}^2 - \norm{\lambda(x-y)}^2\Big ] \quad (\forall\, x, y\in \V)
	\end{equation}
	defines a bilinear map with $\ip{x}{x} \geq 0$ for all $x$; hence defines a semi-inner product and induces the semi-norm $\norm{x} \coloneq \sqrt{\ip{x}{x}}$. Moreover,
	\begin{equation} \label{semi-ftvn condition}
		\ip{x}{y} \leq \ip{\lambda(x)}{\lambda(y)} \leq \norm{x} \norm{y} \quad (\forall\, x, y \in \V).
	\end{equation}
	When $p$ is complete, \eqref{ip} defines an inner product.
\end{proposition}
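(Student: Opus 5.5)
The plan is to split the statement into an algebraic part (bilinearity and positivity) and an inequality part. The algebraic part follows from the polynomial nature of $p$. The inequality part reduces to real symmetric matrices via Proposition \ref{gurvits result1}.

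\emph{Bilinearity and positivity.} Write $q(x) \coloneq \norm{\lambda(x)}^2 = \sum_i \lambda_i(x)^2$. Since $p(te-x) = p(e)\prod_{i=1}^n (t-\lambda_i(x))$, each elementary symmetric function $\sigma_k(\lambda(x))$ is, up to sign and the factor $p(e)$, a coefficient of the polynomial $t\mapsto p(te-x)$. Hence $\sigma_k(\lambda(x))$ is a homogeneous polynomial of degree $k$ in $x$. Then
\[ q(x) = \sigma_1(\lambda(x))^2 - 2\sigma_2(\lambda(x)) \]
is a homogeneous quadratic polynomial (a quadratic form) on $\V$. By the polarization identity, $\frac14[q(x+y)-q(x-y)]$ is exactly the symmetric bilinear form $B$ associated with $q$. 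This gives bilinearity of \eqref{ip} and $\ip{x}{x} = q(x) = \norm{\lambda(x)}^2 \geq 0$. In particular, $\norm{x} = \norm{\lambda(x)}$.

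\emph{The inequality \eqref{semi-ftvn condition}.} Fix $x,y$ and choose symmetric $A,B$ as in Proposition \ref{gurvits result1}, so that $\lambda(rx+sy) = \lambda(rA+sB)$ for all $r,s$. Then
\[ \norm{\lambda(x\pm y)}^2 = \norm{\lambda(A\pm B)}^2 = \tr\big((A\pm B)^2\big), \]
so $\ip{x}{y} = \frac14\big[\tr((A+B)^2) - \tr((A-B)^2)\big] = \tr(AB)$. Similarly, $\lambda(x) = \lambda(A)$ and $\lambda(y) = \lambda(B)$. The first inequality becomes $\tr(AB) \leq \ip{\lambda(A)}{\lambda(B)}$, which is the classical von Neumann--Fan trace inequality for symmetric matrices. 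The second inequality is Cauchy--Schwarz in $\Rn$ together with $\norm{\lambda(x)} = \norm{x}$.

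\emph{Completeness.} If $p$ is complete and $\ip{x}{x} = 0$, then $\norm{\lambda(x)} = 0$, so $\lambda(x) = 0$ and hence $x = 0$. Thus the semi-inner product is an inner product.

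The main obstacle is bilinearity. Gurvits' representation handles only two elements at a time, while additivity in the first slot involves three. This is why bilinearity is obtained from the polynomial identity for $q$ rather than from matrices; the matrix reduction is reserved for the two-element inequality. The trace inequality itself is invoked as a known classical fact.
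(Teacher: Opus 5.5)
Your proof is correct. The paper gives no argument of its own here and simply defers to Bauschke et al., so the useful comparison is with the tools the paper already has.

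Your bilinearity step is the standard argument. You show that $\norm{\lambda(x)}^2=\sigma_1(\lambda(x))^2-2\sigma_2(\lambda(x))$ is a quadratic form in $x$ and then polarize. This correctly gets around the fact that Proposition \ref{gurvits result1} only controls two elements at a time.

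For the inequality you pass to the matrix representation and apply von Neumann's trace inequality. This is the same reduction the paper uses in proving Theorem \ref{SG theorem}, and it gives you the formula $\ip{x}{y}=\trace(AB)$ as a bonus. However, it rests on the Lax conjecture, which was settled after the cited source appeared.

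A lighter route uses only results the paper states before this proposition, namely majorization. By Proposition \ref{gurvits result2}$(b)$, $\lambda(x+y)\prec\lambda(x)+\lambda(y)$, and $u\prec v$ implies $\norm{u}\leq\norm{v}$. Hence
\[ \norm{x}^2+2\ip{x}{y}+\norm{y}^2=\norm{\lambda(x+y)}^2\leq\norm{\lambda(x)+\lambda(y)}^2=\norm{\lambda(x)}^2+2\ip{\lambda(x)}{\lambda(y)}+\norm{\lambda(y)}^2. \]
Cancelling via $\norm{x}=\norm{\lambda(x)}$ and $\norm{y}=\norm{\lambda(y)}$ gives $\ip{x}{y}\leq\ip{\lambda(x)}{\lambda(y)}$. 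The Cauchy--Schwarz half then goes exactly as in your proof.

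The majorization $\lambda(x+y)\prec\lambda(x)+\lambda(y)$ can be obtained from the convexity of $\lambda_1+\cdots+\lambda_k$, a G\aa rding-type fact that needs no determinantal representation. So this version avoids both the Lax conjecture and the trace inequality. Yours, in exchange, produces the identification $\ip{x}{y}=\trace(AB)$ that the paper later uses in Theorems \ref{SG theorem} and \ref{rank additive result}.
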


Note: While $\lambda$ and the semi-inner product \eqref{ip} depend on the pair $(p,e)$, for simplicity, we write them by suppressing $p$ and/or $e$. Also, we use the same inner product (norm) notation in $\V$ as well as in $\Rn$. \emph{Additionally, we speak of orthogonality and norm relative to the above semi-inner product.} For example, as a consequence of (\ref{semi-ftvn condition}), every element of $\lplus\cap \hspace{1pt} (-\lplus )$ is orthogonal to every element of $\V$.

\gap
We observe that \eqref{semi-ftvn condition} is equivalent to \emph{inner product-expanding and norm-preserving} conditions:
\begin{equation} \label{ip expanding} 
	\ip{x}{y} \leq \ip{\lambda(x)}{\lambda(y)} \quad \text{and} \quad \norm{x} = \norm{\lambda(x)} \quad (\forall\, x, y \in \V). 
\end{equation}
In a hyperbolic system $(\V,p,e)$, we define the \emph{trace} of an element $x$ in $\V$  by
\begin{equation} \label{trace}
	\tr(x) \coloneq \ip{x}{e} = \ip{\lambda(x)}{\one}=\text{sum of all the entries in $\lambda(x)$}. 
\end{equation}

Also, we let $(\lplus)^*$ denote the dual of $\lplus$ defined by 
\[ (\lplus)^* \coloneq \set{x \in \V}{\ip{x}{y} \geq 0, \,\, \forall\, y \in \lplus}. \]
We have the following result.

\begin{theorem}  \textup{(The hyperbolicity cone is a subdual cone)} \label{SG theorem}
	Consider a hyperbolic system $(\V, p, e)$ and the corresponding semi-inner product \eqref{ip}. Then, for all $x, y \in \lplus$, we have $\ip{x}{y} \geq 0$. Thus,
	\[ \lplus \subseteq (\lplus)^*. \]
\end{theorem}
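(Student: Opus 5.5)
The plan is to reduce to a statement about two real symmetric matrices using Proposition~\ref{gurvits result1}, and there invoke the classical fact that the positive semidefinite cone is self-dual.

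Fix $x,y\in\lplus$. By the polarization formula \eqref{ip},
\[ \ip{x}{y}=\tfrac14\Big[\norm{\lambda(x+y)}^2-\norm{\lambda(x-y)}^2\Big], \]
and this depends only on the eigenvalue vectors of $x+y$ and $x-y$. By Proposition~\ref{gurvits result1} there are $n\times n$ real symmetric matrices $A,B$ with $\lambda(rx+sy)=\lambda(rA+sB)$ for all $r,s\in\R$. Taking $(r,s)=(1,1)$ and $(1,-1)$ gives $\lambda(x\pm y)=\lambda(A\pm B)$. For a symmetric matrix $X$, $\norm{\lambda(X)}^2=\tr(X^2)$ (Frobenius norm squared), so
\[ \ip{x}{y}=\tfrac14\Big[\tr((A+B)^2)-\tr((A-B)^2)\Big]=\tr(AB). \]

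Next, taking $(r,s)=(1,0)$ and $(0,1)$ gives $\lambda(A)=\lambda(x)\geq 0$ and $\lambda(B)=\lambda(y)\geq0$. Hence $A$ and $B$ are positive semidefinite. The trace of a product of two positive semidefinite matrices is nonnegative, since $\tr(AB)=\tr(A^{1/2}BA^{1/2})\geq 0$. Therefore $\ip{x}{y}\geq 0$. As $x,y\in\lplus$ were arbitrary, every $x\in\lplus$ lies in $(\lplus)^*$, which gives $\lplus\subseteq(\lplus)^*$.

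The only delicate point is making sure the single pair $(A,B)$ supplied by Proposition~\ref{gurvits result1} simultaneously reproduces $\lambda(x)$, $\lambda(y)$, $\lambda(x+y)$ and $\lambda(x-y)$. This holds because the proposition is stated for all $r,s$. Everything else is routine matrix algebra.
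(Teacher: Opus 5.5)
Your proof is correct and follows essentially the same route as the paper. Both use Proposition~\ref{gurvits result1} to get one pair of symmetric matrices $A,B$ realizing $\lambda(x)$, $\lambda(y)$ and $\lambda(x\pm y)$, identify $\ip{x}{y}$ with $\trace(AB)$, and conclude from $A,B$ being positive semidefinite; the only cosmetic difference is that you justify $\trace(AB)\geq 0$ via $\trace(A^{1/2}BA^{1/2})$, while the paper writes it as $\trace(ZZ^T)$ with $Z=\sqrt{A}\sqrt{B}$.
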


\begin{proof}
	Fix $x, y \in \lplus$. From Proposition \ref{gurvits result1}, there exist $n \times n$ real symmetric matrices $A$ and $B$ such that $\lambda(x) = \lambda(A)$, $\lambda(y) = \lambda(B)$, $\lambda(x + y) = \lambda(A + B)$ and $\lambda(x - y) = \lambda(A - B)$. As $\lambda(A) = \lambda(x) \geq 0$ and $\lambda(B) = \lambda(y) \geq 0$, we see that $A$ and $B$ are positive semidefinite matrices. It follows (from the note below) that $\trace(AB) \geq 0$. Then, 
	\[ \ip{x}{y} = \frac{1}{4} \Big[ \norm{\lambda(A + B)}^2 - \norm{\lambda(A - B)}^2 \Big] = \trace(AB) \geq 0. \]
	This completes the proof.
\end{proof}

Note: In the Euclidean Jordan algebra $\Sn$ of all $n \times n$ real symmetric matrices, the inner product is given by $\ip{X}{Y} \coloneq \trace(XY)$ ($=$ the sum of diagonal entries of $XY$ $=$ the sum of all eigenvalues of $XY$). So, $\norm{\lambda(X)}^2 = \norm{X}^2$. Moreover, when $X$ and $Y$ are positive semidefinite, writing $Z \coloneq \sqrt{X}\sqrt{Y},$ we have 
\begin{align*}
	\trace(XY) &= \trace(\sqrt{X}\sqrt{X}\sqrt{Y}\sqrt{Y}) \\
	& = \trace(\sqrt{X}\sqrt{Y}\sqrt{Y}\sqrt{X}) = \trace(ZZ^T) \geq 0.
\end{align*} 
Consequently, when both $X$ and $Y$ are positive semidefinite and $\trace(XY)=0$, we have $XY = 0 = YX$; in particular, $X$ and $Y$ commute.

\begin{definition} \label{rank def}
	Let $(\V, p, e)$ be a hyperbolic system. For any $x \in \V$, we define $\rank(x) \coloneq$ the number of nonzero entries in $\lambda(x)=$ the degree of $p(e+tx)$ (as a polynomial in $t$). We say that an element $c\in \V$ is of rank one or a rank-one element if $\rank(c)=1$.
\end{definition}

\begin{remark}
	An observation due to Gurvits \cite{gurvits2005combinatorial} (see also Renegar \cite{renegar2006hyperbolic}) says that in $(\V,p,e)$, $\rank(x)$ is independent of the direction $e$, i.e., one could replace $e$ by any other $d\in \Lambda_{++}$. However, when $\V$ and $e$ are fixed, there could be several hyperbolic polynomials inducing (the same) $\lplus$. In this setting, $\rank(x)$ depends on the choice of the hyperbolic polynomial; see Examples \ref{example3.3} and \ref{example3.4} below.
\end{remark}

\begin{proposition} \label{ito2023automorphisms} 
	In a hyperbolic system, the following hold:
	\begin{itemize}
		\item $\max\{\rank(x), \rank(y)\}\leq \rank(x+y)$ for all $x, y \in \lplus$.
		\item $\rank(x + y) \leq \rank(x) + \rank(y)$ for all $x, y \in \V$.
	\end{itemize}
\end{proposition}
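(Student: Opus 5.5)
The plan is to pass to symmetric matrices via Proposition \ref{gurvits result1}, where both inequalities reduce to standard facts about matrix rank. The key observation is that for any $x,y\in\V$ there are $n\times n$ real symmetric matrices $A,B$ with $\lambda(rx+sy)=\lambda(rA+sB)$ for all $r,s\in\R$. By Definition \ref{rank def}, $\rank(x)$ is the number of nonzero entries of $\lambda(x)$. For a real symmetric matrix, the number of nonzero eigenvalues counted with multiplicity equals its usual matrix rank. Taking $(r,s)=(1,0),(0,1),(1,1)$ therefore gives
\[ \rank(x)=\rank(A),\quad \rank(y)=\rank(B),\quad \rank(x+y)=\rank(A+B). \]
Both statements are thus reduced to the corresponding statements about $A$, $B$ and $A+B$.

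For the second item, subadditivity of matrix rank applies directly: the column space of $A+B$ lies in the sum of the column spaces of $A$ and $B$, so $\rank(A+B)\le \rank(A)+\rank(B)$. Transferring back gives $\rank(x+y)\le\rank(x)+\rank(y)$ for all $x,y\in\V$.

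For the first item, take $x,y\in\lplus$. Then $\lambda(A)=\lambda(x)\ge 0$ and $\lambda(B)=\lambda(y)\ge0$, so $A$ and $B$ are positive semidefinite, exactly as in the proof of Theorem \ref{SG theorem}. For positive semidefinite matrices, $\ker(A+B)=\ker A\cap\ker B$. Indeed, if $(A+B)v=0$ then $v^TAv+v^TBv=0$ with both terms nonnegative, so $v^TAv=0=v^TBv$, which forces $Av=0=Bv$. Hence $\ker(A+B)\subseteq\ker A$, so $\rank(A+B)\ge\rank(A)$, and symmetrically $\rank(A+B)\ge\rank(B)$. Translating back gives $\max\{\rank(x),\rank(y)\}\le\rank(x+y)$.

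An alternative route for the first item stays intrinsic. Since $x\le x+y$, Proposition \ref{gurvits result2}(c) gives $0\le\lambda(x)\le\lambda(x+y)$ entrywise, and the number of positive entries can only grow. This also yields the first item without matrices. There is no real obstacle in either part. The only point needing care is the identification of $\rank$ with the number of nonzero eigenvalues of the representing matrices, and this follows immediately because $\lambda$ is preserved by the Gurvits representation.
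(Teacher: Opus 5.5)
Your proposal is correct and matches the paper's proof. The second item is obtained exactly as in the paper, via Proposition \ref{gurvits result1} and subadditivity of matrix rank. Your ``alternative route'' for the first item, the monotonicity $0\le\lambda(x)\le\lambda(x+y)$ from Proposition \ref{gurvits result2}(c), is precisely the paper's argument; your primary kernel argument $\ker(A+B)=\ker A\cap\ker B$ for positive semidefinite $A,B$ is also valid and just replaces that step with an equivalent matrix fact.
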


\begin{proof} 
	The first inequality is a consequence of Item $(c)$ in Proposition \ref{gurvits result2}; the second inequality can be proved via Proposition \ref{gurvits result1} by noting that such a rank inequality holds for real symmetric matrices. (We note that the second rank inequality is also proved in \cite{ito2023automorphisms}, Proposition 2.4 for $x,y\in \lplus$.)
\end{proof}

One consequence of the above result is that 
\[ \rank(y) = 0 \implies \rank(x+y) = \rank(x) + \rank(y) = \rank(x) \quad (\forall\, x \in \lplus). \]
In our next result, we improve this by showing that rank is additive on mutually orthogonal elements of $\lplus$. It is essentially a consequence of the result that for positive semidefinite matrices $A$ and $B$ with $\trace(AB)=0$, the eigenvalue vector of $A+B$ consists of the nonzero eigenvalues of $A$ and $B$, and possibly some zeros.

\begin{theorem} \label{rank additive result}
	\textup{(Rank additivity on orthogonal elements of $\lplus$)}
	Consider a hyperbolic system $(\V, p, e)$ of degree $n$ and the corresponding semi-inner product \eqref{ip}.  Let $a, b \in \lplus$ with $\ip{a}{b} = 0$. Then the following statements hold:
	\begin{itemize}
		\item[$(i)$] If $\lambda(a)=0$, then $\lambda(a+b)=\lambda(b)$.
		\item[$(ii)$] If $\lambda(a)\neq 0$ with $\lambda(a) = (r_1, r_2, \ldots, r_k, 0, 0, \ldots, 0)^T$, where $r_i > 0$ for $1 \leq i \leq k \leq n$, then $\lambda(a+b)$ is of the form
		\[ \lambda(a+b) = \big[ (r_1, r_2, \ldots, r_k, s_{k+1}, s_{k+2}, \ldots, s_n)^T \big]^\downarrow, \]
		where each $s_i$ is an eigenvalue of $b$ and every nonzero eigenvalue of $b$ appears in the set $\{s_{k+1}, s_{k+2}, \ldots, s_n\}$.
		\item[$(iii)$] $ \rank(a+b) = \rank(a) + \rank(b). $
	\end{itemize}
	Moreover, if $a_1, a_2, \ldots, a_N$ are mutually orthogonal elements of $\lplus$, then
	\begin{equation} \label{rank sum}
		\rank(a_1 + a_2 + \cdots + a_N) = \rank(a_1) + \rank(a_2) + \cdots + \rank(a_N).
	\end{equation}
\end{theorem}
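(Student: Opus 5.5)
The plan is to reduce everything to positive semidefinite matrices via Proposition \ref{gurvits result1}, as in the proof of Theorem \ref{SG theorem}. Given $a,b\in\lplus$ with $\ip{a}{b}=0$, I pick $n\times n$ real symmetric matrices $A,B$ with $\lambda(ra+sb)=\lambda(rA+sB)$ for all $r,s\in\R$. Taking $(r,s)=(1,0),(0,1),(1,1),(1,-1)$ gives $\lambda(A)=\lambda(a)\geq 0$ and $\lambda(B)=\lambda(b)\geq 0$, so $A,B$ are positive semidefinite. The polarization formula \eqref{ip} then gives
\[ \ip{a}{b}=\tfrac14\big[\norm{\lambda(A+B)}^2-\norm{\lambda(A-B)}^2\big]=\trace(AB). \]
Hence $\trace(AB)=0$, and by the note after Theorem \ref{SG theorem}, $AB=BA=0$.

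Since $A$ and $B$ commute, I diagonalize them simultaneously with an orthogonal $U$: $A=U\diag(\alpha)U^T$ and $B=U\diag(\beta)U^T$. From $AB=0$ we get $\alpha_i\beta_i=0$ for every $i$. Thus in each coordinate at most one of $\alpha_i,\beta_i$ is nonzero. It follows that $\lambda(A+B)=(\alpha+\beta)^\downarrow$, and its entries are the nonzero entries of $\alpha$, the nonzero entries of $\beta$, and zeros.

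Now the three items follow. For $(i)$: if $\lambda(a)=0$ then $A=0$, so $\lambda(a+b)=\lambda(A+B)=\lambda(B)=\lambda(b)$. For $(ii)$: $\alpha$ has exactly $k$ nonzero entries $r_1,\dots,r_k$, located at an index set $I$ with $|I|=k$, and $\beta_i=0$ for $i\in I$. The remaining $n-k$ entries of $\alpha+\beta$ are $\beta_j$ for $j\notin I$. These $\beta_j$ are eigenvalues of $b$ and include every nonzero eigenvalue of $b$. Naming them $s_{k+1},\dots,s_n$ gives the stated form. For $(iii)$: the number of nonzero entries of $\alpha+\beta$ equals the number of nonzero entries of $\alpha$ plus that of $\beta$, i.e.\ $\rank(a)+\rank(b)$. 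The case $\lambda(a)=0$ is already covered by $(i)$.

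For the \emph{moreover} part, I argue by induction on $N$. Set $s=a_1+\cdots+a_{N-1}\in\lplus$, since $\lplus$ is a convex cone. By bilinearity of the semi-inner product (Proposition \ref{bauschke2001hyperbolic}), $\ip{s}{a_N}=\sum_{i<N}\ip{a_i}{a_N}=0$. Applying $(iii)$ to the pair $s,a_N$ and then the induction hypothesis to $s$ gives \eqref{rank sum}.

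The only delicate step is the first one. Proposition \ref{gurvits result1} must be applied to the \emph{pair} $a,b$ so that a single pair $(A,B)$ simultaneously realizes $\lambda(a)$, $\lambda(b)$ and $\lambda(a\pm b)$. Only then does $\ip{a}{b}$ become $\trace(AB)$. Once that is in place, everything else is linear algebra of commuting PSD matrices.
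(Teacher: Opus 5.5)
Your proposal is correct and takes essentially the same route as the paper. You apply Proposition \ref{gurvits result1} to the pair $a,b$, identify $\ip{a}{b}$ with $\trace(AB)$, simultaneously diagonalize the commuting positive semidefinite matrices, read off $(i)$--$(iii)$, and use induction for \eqref{rank sum}. The differences are only cosmetic: you get disjoint supports directly from $AB=0$, whereas the paper uses $\trace(DE)=\sum r_i s_i=0$ with nonnegativity, and you spell out why $a_1+\cdots+a_{N-1}\in\lplus$ is orthogonal to $a_N$, which the paper leaves implicit.
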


\begin{proof}
	By Proposition \ref{gurvits result1}, there exist $n \times n$ real symmetric matrices $A$ and $B$ such that 
	\begin{equation} \label{gurvits}
		\lambda(ta+sb) = \lambda(tA+sB),
	\end{equation}
	for all $t, s \in \R$. In particular, $\lambda(A) = \lambda(a)$ and  $\lambda(B) = \lambda(b)$. Now, as $a, b \in \lplus$, $\lambda(a)$ and $\lambda(b)$ are nonnegative. It follows that $A$ and $B$ are positive semidefinite matrices. From \eqref{ip}, 
	\[ 0 = \ip{a}{b} = \ip{A}{B} = \trace(AB). \]
	This implies that $A$ and $B$ commute in the usual sense (see the statement made before Definition \ref{rank def}); consequently, there exists an orthogonal matrix $U$ such that
	\[ A = UDU^T \quad \text{and} \quad B = UEU^T, \]
	where $D$ and $E$ are diagonal matrices with diagonal entries of $D$ and $E$ coming from entries of $\lambda(A)$ and $\lambda(B)$ respectively. We now justify items $(i)- (iii)$.
	
	$(i)$ Suppose $\lambda(a)=0$. Then, $\lambda(A)=0$. As $A$ is positive semidefinite, $A=0$. Then, from \eqref{gurvits}, $\lambda(a+b) = \lambda(A+B) = \lambda(B) = \lambda(b)$.
	
	$(ii)$ Here $0\neq \lambda(a)=(r_1, r_2, \ldots, r_k, 0, 0, \ldots, 0)^T=\lambda(A)$. Consider the diagonal matrices $D$ and $E$ as above. By permuting the diagonal entries of $D$ if necessary, i.e., changing $U$, we may assume that the diagonal vector of $D$ is written as $\operatorname{diag}(D) = (r_1, r_2, \ldots, r_k, 0, 0, \ldots, 0)^T$. When we do this, we may write 
	$\operatorname{diag}(E) = s = (s_1, s_2, \ldots, s_n)^T$ so that $\lambda(b) = \lambda(B) = s^\downarrow$, i.e., each $s_i$ is an eigenvalue of $b$. Then,  
	\[ 0 = \ip{A}{B} = \ip{D}{E} = \trace(DE) = r_1s_1 + r_2s_2 + \cdots + r_ks_k. \]
	Since all $r_i$ are positive and $s_i$ are nonnegative, we see that $s_i = 0$ for $1 \leq i \leq k$. This implies that the nonzero eigenvalues of $b$ appear in the set
	$\{s_{k+1}, s_{k+2}, \ldots, s_n\}$. Now, we have $A+B = U(D+E)U^T$, where $\operatorname{diag}(D + E) = (r_1, r_2, \ldots, r_k, s_{k+1}, \ldots, s_n)^T$. It follows that 
	\[ \lambda(a+b) = \lambda(A+B) = \big[ (r_1, r_2, \ldots, r_k, s_{k+1}, \ldots, s_n)^T \big]^\downarrow. \]
	This proves $(ii)$.
	
	$(iii)$ In view of  Items $(i)$ and $(ii)$, by counting the number of nonzero elements in $\lambda(a + b)$, we see that $\rank(a + b) = \rank(a) + \rank(b)$.\\
	Finally, \eqref{rank sum} comes from an induction argument. (For the induction step, say, to go from $m$ to $m+1$, let $a \coloneq a_1 + a_2 + \cdots + a_m$ and $b \coloneq a_{m+1}$.)
\end{proof}

\begin{definition}\label{automorphism definition}
	Suppose $(\V, p, e)$ is a hyperbolic system. An invertible linear transformation $A : \V \to \V$ is said to be a
	\begin{itemize}
		\item \emph{system-automorphism} if $\lambda(Ax) = \lambda(x)$ for all $x \in \V$. We write $\Aut(\V,p,e)$ $($in short, $\Aut(\V))$ for the group of all such automorphisms;
		\item \emph{cone-automorphism} if $A(\lplus) = \lplus$. We write $\Aut(\lplus)$ for the group of all such automorphisms.
	\end{itemize}
\end{definition}

\begin{proposition}\label{prop properties of automorphism}
	In a hyperbolic system, we have the following:
	\begin{itemize}
		\item[$(i)$] Every system-automorphism is inner-product-preserving.
		\item[$(ii)$] $\Aut(\V) \subseteq \Aut(\lplus)$.
		\item[$(iii)$] Every system-automorphism preserves the rank of any $x \in \V$.
		\item[$(iv)$] When $p$ is a minimal polynomial, every cone-automorphism preserves the rank of any $x \in \lplus$. 
	\end{itemize}
\end{proposition}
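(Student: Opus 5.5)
Items $(i)$–$(iii)$ follow directly from the definitions and the polarization formula \eqref{ip}; item $(iv)$ needs the Helton–Vinnikov uniqueness of minimal polynomials.

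For $(i)$, let $A\in\Aut(\V)$. By linearity, $\lambda(Ax\pm Ay)=\lambda(A(x\pm y))=\lambda(x\pm y)$. Substituting into \eqref{ip} gives $\ip{Ax}{Ay}=\ip{x}{y}$ for all $x,y$.

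For $(ii)$, if $x\in\lplus$ then $\lambda(Ax)=\lambda(x)\geq 0$, so $A(\lplus)\subseteq\lplus$. Since $A^{-1}$ is also a system-automorphism ($\lambda(A^{-1}y)=\lambda(AA^{-1}y)=\lambda(y)$), the same argument gives $A^{-1}(\lplus)\subseteq\lplus$. Hence $A(\lplus)=\lplus$.

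For $(iii)$, rank is the number of nonzero entries of $\lambda(x)$, and $\lambda(Ax)=\lambda(x)$, so $\rank(Ax)=\rank(x)$.

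For $(iv)$, let $A\in\Aut(\lplus)$ and $p$ minimal.
\begin{enumerate}
\item Set $d\coloneq Ae\in\Lambda_{++}$; interior points go to interior points since $A$ is a linear homeomorphism preserving $\lplus$.
\item Define $q(x)\coloneq p(A^{-1}x)$. This is homogeneous of degree $n$, and $q(te-x)=p(tA^{-1}e-A^{-1}x)$. Here $A^{-1}e\in\Lambda_{++}$, and $p$ is hyperbolic in every direction of $\Lambda_{++}$ with the same cone, so these roots are real. The hyperbolicity cone of $q$ in direction $e$ is $A(\lplus(p,A^{-1}e))=A(\lplus)=\lplus$.
\item So $q$ is a hyperbolic polynomial of degree $n=\deg p$ inducing $\lplus$ in direction $e$. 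By minimality of $p$ and the Helton–Vinnikov result, $q$ is also minimal, so $q=cp$ for some constant $c\neq 0$.
\item By the remark after Definition~\ref{rank def} (Gurvits/Renegar), rank is independent of the direction within $\Lambda_{++}$. Then
\[
\rank_p(Ax)=\deg_t p(d+tAx)=\deg_t p\big(A(e+tx)\big)=\deg_t q^{-}(e+tx)\text{-type expression}.
\]
Concretely, $p(A(e+tx))=\tilde q(e+tx)$ with $\tilde q\coloneq p\circ A$. Applying the argument of steps 2–3 to $A^{-1}$ in place of $A$ shows $\tilde q$ is a positive or nonzero multiple of $p$, so this degree equals $\deg_t p(e+tx)=\rank(x)$.
\end{enumerate}

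The main obstacle is step 3: justifying that $q$ induces exactly $\lplus$ in direction $e$ and invoking uniqueness of the minimal polynomial up to a constant. I would handle this by checking $\lambda_{(q,e)}(x)=\lambda_{(p,A^{-1}e)}(A^{-1}x)$ and using that $\lplus(p,d)=\lplus(p,e)$ for all $d\in\Lambda_{++}$. The restriction to $x\in\lplus$ is harmless here, and the argument in fact gives rank preservation on all of $\V$.
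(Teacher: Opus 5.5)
Your proof is correct. For $(i)$–$(iii)$ you do what the paper does: the polarization formula \eqref{ip} for $(i)$, and the definitions for $(ii)$ and $(iii)$. Your observation that $A^{-1}$ is again a system-automorphism is the small point the paper leaves implicit when it calls $(ii)$ obvious.

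For $(iv)$ the routes differ. The paper gives no argument and simply cites Proposition 2.8 of \cite{ito2023automorphisms}; you reconstruct the result. Your key step is that $p\circ A^{-1}$ is a degree-$n$ polynomial, hyperbolic in direction $e$, with hyperbolicity cone $A(\lplus(p,A^{-1}e))=\lplus$. It is therefore a nonzero multiple of $p$ by the Helton--Vinnikov divisibility. This is the identity $p(Ax)=\alpha\,p(x)$ of \cite[Proposition 2.6]{ito2023automorphisms}, which the paper itself quotes in the proof of Corollary~\ref{system-auto characterization}. Combining it with direction-independence of rank gives
$\rank(Ax)=\rank_{(p,Ae)}(Ax)=\deg_t p(A(e+tx))=\deg_t p(e+tx)=\rank(x)$.

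What your route buys is transparency. It isolates the single place where minimality enters, namely forcing $p\mid p\circ A^{-1}$ with equal degrees. It also shows the conclusion holds for all $x\in\V$ once you use the general form of direction-independence stated in the remark after Definition~\ref{rank def}. For $x\in\lplus$, Renegar's version cited in Proposition~\ref{hereditary property of rank-one element}(a) suffices, since $Ax\in\lplus$. The paper's citation route is simply shorter.

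Two cosmetic points. The display in your step 4 ending in a ``$q^{-}(e+tx)$-type expression'' is not meaningful and should be replaced by the ``Concretely'' sentence that follows it. Also, re-running steps 2--3 with $A^{-1}$ is unnecessary: substituting $x\mapsto Ax$ in $p(A^{-1}x)=c\,p(x)$ already gives $p\circ A=c^{-1}p$.
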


\begin{proof}
	Item $(i)$ comes from (\ref{ip}). Items $(ii)$ and $(iii)$ are obvious. Finally, Item $(iv)$ comes from \cite[Proposition 2.8]{ito2023automorphisms}.
\end{proof}

We now define the concept of a \emph{doubly stochastic transformation} on a hyperbolic system.

\begin{definition} \label{definition: ds}
	Given a hyperbolic system $(\V, p, e)$, a linear transformation $T : \V \to \V$ is said to be \emph{doubly stochastic} if 
	\[ T(\lplus) \subseteq \lplus, \quad  T(e) = e, \quad  \text{and} \quad \tr(T(x)) = \tr(x) \quad (\forall\, x \in \V). \]
\end{definition}

The above properties are respectively called the positivity, unital, and trace-preserving properties. We note that when $p$ is complete (in which case \eqref{ip} defines an inner product), the trace condition above is equivalent to $T^*e = e$, where $T^*$ denotes the adjoint of $T$. Additionally, when the system comes from a Euclidean Jordan algebra, $T$ is doubly stochastic if and only if $T^*$ is doubly stochastic. This is because a symmetric cone (the hyperbolicity cone in a Euclidean Jordan algebra) is self-dual.

\section{Rank-one elements, idempotents, and minimal polynomials}

Recall that in a hyperbolic system $(\V, p, e)$, an element $c$ has rank one if $c$ has just one nonzero eigenvalue. In particular,  $c\in \lplus$ is a rank-one element if 
$\lambda(c) = (\alpha, 0, 0, \ldots, 0)^T$, where $\alpha>0$. 

\begin{definition}
	Let $(\V, p, e)$ be a hyperbolic system of degree $n$. Let $\lambda: \V \to \Rn$ be the corresponding eigenvalue map. We say that an element $c\in \V$ is 
	\begin{itemize} 
		\item an \emph{idempotent} if $\lambda(c) = (1, 1, \ldots, 1, 0, 0, \ldots, 0)^T = \one_k$ for some $k$, $1 \le k \le n$. 
		\item a \emph{primitive idempotent} if $\lambda(c) = (1, 0, 0, \ldots, 0)^T$.
	\end{itemize}
\end{definition}

\emph{Note that the above concepts are restricted to nonzero elements of $\lplus$} and depend on $p$ and $e$. A rank-one element of $\lplus$ is a positive scalar multiple of a primitive idempotent. As we see below (in Example \ref{example3.2}), the existence of a rank-one element is not guaranteed. However, $e$ is (always) an idempotent as $\lambda(e) = \one$. Since $\norm{x} = \norm{\lambda(x)}$ for all $x\in \V$, the norm of any primitive idempotent is $1$ and the norm of $e$ is $\sqrt{n}$.

\begin{example} \label{example3.2} 
	On $\R$, let $e=1$ and $p(x) = x^2$. Then, $\lambda(x) = (x,x)^T \in \R^2$. In the complete hyperbolic system $(\R, p, 1)$, there are no rank-one elements. In particular, the idempotent $e$ is not a sum of primitive idempotents.
\end{example}

\begin{example} \label{example3.3} 
	On $\R^3$, for $x = (x_1,x_2,x_3)^T$, let $p(x) \coloneq x_1x_2x_3$ and $e = (1, 1, 1)^T$. Then, $(\R^3, p, e)$ is a complete hyperbolic system with $\lambda(x)=x^\downarrow \in \R^3$. Correspondingly, $\ip{x}{y} = x_1y_1 + x_2y_2 + x_3y_3$. In this system, the standard coordinate vectors $e_1$, $e_2$, and $e_3$ are primitive idempotents.
\end{example}

\begin{example} \label{example3.4} 
	On $\R^3$, for $x = (x_1, x_2, x_3)^T$, let $p(x) \coloneq x_1^2x_2x_3$ and $e = (1, 1, 1)^T$. Then, $(\R^3, p, e)$ is a complete hyperbolic system with $\lambda(x)=[(x_1, x_1, x_2, x_3)^T]^\downarrow \in \R^4$. Correspondingly, $\ip{x}{y} = 2x_1y_1 + x_2y_2 + x_3y_3$. In this system, the standard coordinate vectors $e_2$ and $e_3$ are primitive idempotents, but $e_1$ is not (as it has rank $2$).
\end{example}

\begin{example} \label{example3.5} 
	On $\R^3$, for $x=(x_1, x_2, x_3)^T$, let $p(x) \coloneq x_1x_2$ and $e = (1, 1, 1)^T$. Then, $(\R^3, p, e)$ is a hyperbolic system with $\lambda(x) = [(x_1, x_2)^T]^\downarrow \in \R^2$.
	Correspondingly, $\ip{x}{y} = x_1y_1 + x_2y_2$. In this system, the standard coordinate vectors $e_1$ and $e_2$ are primitive idempotents.
\end{example}

The following example, taken from \cite{nagano2024projection}, shows that even in a minimal hyperbolic system, there may not be any rank-one element (in particular, primitive idempotent).

\begin{example} \label{example3.6}
	Consider the polynomial $p : \R^3 \to \R$ defined by
	\[ p(x) = (x_1 + x_2 + x_3)(x_1 - x_2 + x_3)(2x_1 - x_2 - x_3)(x_1 + 2x_2 - x_3). \]
	Then $p$ is a minimal hyperbolic polynomial of degree $4$ in the direction $e = (0, 0, 1)^T$; see \cite[Proposition 3.10]{nagano2024projection}. For any $x \in \R^3$, we note that the roots of the polynomial $t \mapsto p(te - x)$ are
	\begin{align*}
		r_1 &= x_1 + x_2 + x_3, & r_2 &= x_1 - x_2 + x_3, \\
		r_3 &= -2x_1 + x_2 + x_3, & r_4 &= -x_1 - 2x_2 + x_3.
	\end{align*}
	Now assume that any three of the roots $\{r_1, r_2, r_3, r_4\}$ are zero. Then the associated homogeneous linear system has only the trivial solution $x_1 = x_2 = x_3 = 0$, which forces the remaining root to be zero as well. Thus, there is no rank-one element (in particular, no primitive idempotent).
\end{example}

\begin{example} 
	Consider a Euclidean Jordan algebra $\V$ of rank $n$ with unit $e$. As described in the Introduction, let $\lambda: \V \to \Rn$ denote the eigenvalue map. With $p(x) = \det (x)$, which is the product of the entries of $\lambda(x)$, $(\V, p, e)$ becomes a complete and minimal hyperbolic system of degree $n$ (the minimality of $p$ is shown in \cite{ito2023automorphisms}; also see Corollary  \ref{ito2023automorphisms corollary} below); the induced inner product is the trace inner product $\ip{x}{y} = \tr(x \circ y)$. The primitive idempotents in the algebra $\V$ are exactly the primitive idempotents in the corresponding hyperbolic system. Thanks to the spectral decomposition theorem, every element is a linear combination of primitive idempotents and, in particular, every idempotent is a sum of primitive idempotents. In the Euclidean Jordan algebra $\Rn$ (which carries the usual inner product and componentwise product with $\one$ as the unit), the standard coordinate vectors are the only primitive idempotents.
\end{example}

Here is an elementary result regarding rank-one elements.

\begin{proposition} \label{primitive idempotents} 
	In a complete hyperbolic system, every rank-one element in $\lplus$ induces an extreme, exposed direction of $\lplus$.     
\end{proposition}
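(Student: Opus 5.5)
Let $c \in \lplus$ be of rank one with $\lambda(c) = (\alpha,0,\ldots,0)^T$, $\alpha>0$. The plan is to exhibit an explicit supporting linear functional: $y \mapsto \ip{y}{u}$ with $u := e - c/\alpha$. It is nonnegative on $\lplus$ and vanishes exactly on the ray through $c$. Exposedness of the ray $\{tc : t\ge 0\}$ then follows directly. Extremality follows too, since every exposed face is a face and a one-dimensional face of a pointed cone is an extreme ray.

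\textbf{Step 1 (nonnegativity).} Replacing $c$ by $c/\alpha$, I may assume $c$ is a primitive idempotent. By \eqref{translation property of lambda}, $\lambda(e-c) = \one - \lambda(c)^{\uparrow}$ rearranged, which is $(1,\ldots,1,0)^T$. Hence $u = e-c \in \lplus$. By Theorem \ref{SG theorem}, $\ip{y}{u}\ge 0$ for every $y\in\lplus$, and $\ip{c}{u} = \tr(c) - \norm{c}^2 = 1-1 = 0$. So $H := \set{y}{\ip{y}{u}=0}$ is a supporting hyperplane containing $c$.

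\textbf{Step 2 (the face is the ray).} Take $y\in\lplus$ with $\ip{y}{u}=0$. I will apply Theorem \ref{rank additive result} with $a=u$, $b=y$, both in $\lplus$ and orthogonal. Part $(iii)$ gives $\rank(u+y) = (n-1)+\rank(y)\le n$, so $\rank(y)\le 1$. Thus either $\lambda(y)=0$, in which case completeness gives $y=0$, or $\lambda(y)=(\beta,0,\ldots,0)^T$ with $\beta>0$. In the second case, part $(ii)$ gives $\lambda(u+y) = [(1,\ldots,1,\beta)^T]^\downarrow$. It remains to show $y=\beta c$.

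For this I use Proposition \ref{gurvits result1} on the pair $(c,y)$: there are symmetric matrices $C$ and $Y$ with $\lambda(rc+sy)=\lambda(rC+sY)$ for all $r,s$. Then $C$ and $Y$ are rank-one PSD matrices, $C=vv^T$ with $\norm{v}=1$ and $Y=\beta ww^T$ with $\norm{w}=1$. Moreover $\ip{c}{y}=\trace(CY)=\beta (v^Tw)^2$. On the other hand $\ip{c}{y} = \ip{e}{y} - \ip{u}{y} = \tr(y) = \beta$. Hence $(v^Tw)^2=1$, so $w=\pm v$ and $Y=\beta C$. This gives $\lambda(y-\beta c)=\lambda(Y-\beta C)=0$, and completeness yields $y=\beta c$. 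Therefore $H\cap\lplus=\set{tc}{t\ge 0}$, so the ray is an exposed face.

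\textbf{Step 3 (extremality).} If $c=x_1+x_2$ with $x_i\in\lplus$, then $0\le\ip{x_i}{u}\le\ip{c}{u}=0$. So each $x_i\in H\cap\lplus$, and hence each $x_i$ is a nonnegative multiple of $c$. This is extremality.

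The only delicate point is the end of Step 2: the rank count alone does not force $y$ onto the ray through $c$. The Gurvits reduction to rank-one matrices, combined with the equality case $\ip{c}{y}=\norm{c}\norm{y}$ in \eqref{semi-ftvn condition}, is what closes it. Completeness is used exactly there, to pass from $\lambda(\cdot)=0$ to equality of the elements themselves.
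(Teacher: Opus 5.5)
Your proof is correct, and it takes a genuinely different route from the paper's.

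\textbf{The paper's route.} It proves extremality first. From $c=x+y$ with $x,y\in\lplus$, Proposition~\ref{gurvits result2}(b) and nonnegativity force $\lambda(x)=t\lambda(c)$ and $\lambda(y)=(1-t)\lambda(c)$. Hence $\norm{x+y}=\norm{x}+\norm{y}$, and strict convexity of the inner-product norm (this is where completeness enters) makes $x$ and $y$ multiples of $c$. Exposedness is not proved at all; it is imported from Renegar's theorem that every boundary face of a hyperbolicity cone is exposed.

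\textbf{Your route.} You build the exposing functional $y\mapsto\ip{y}{e-c/\alpha}$ explicitly, using only tools already established in the paper: Theorem~\ref{SG theorem}, rank additivity, and Gurvits' representation. Extremality then comes for free from exposedness.

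\textbf{Comparison.} The paper's argument is shorter for extremality but outsources exposedness to a substantial external result. Yours is self-contained and gives more information: it identifies $e-c/\alpha$, with eigenvalue vector $(1,\ldots,1,0)^T$, as an exposing vector for the ray. This mirrors the Euclidean Jordan algebra picture, where $e-c$ exposes the ray through a primitive idempotent $c$.

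\textbf{Small remarks.}
\begin{itemize}
\item Nontriviality of your functional uses the standing assumption $n\ge 2$, since then $\norm{u}^2=n-1>0$.
\item The Gurvits step at the end of Step~2 can be bypassed. Completeness makes \eqref{ip} an inner product, so once $\ip{c}{y}=\beta=\norm{c}\norm{y}$, you get directly $\norm{y-\beta c}^2=\beta^2-2\beta^2+\beta^2=0$.
\item The bound $\rank(y)\le 1$ also follows without rank additivity, from $\tr(y)=\ip{c}{y}\le\ip{\lambda(c)}{\lambda(y)}=\lambda_1(y)$ together with $\lambda(y)\ge 0$.
\end{itemize}
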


\begin{proof}
	Assume that our system is complete and $c$ is a rank-one element in $\lplus$. Then,  \eqref{ip} defines an inner product.
	By scaling, we may assume that $c$ is a primitive idempotent. Suppose $c = x + y$, where $x, y \in \lplus$. We show that $x$ and $y$ are multiples of $c$. First, we observe that $\lambda(x)$ and $\lambda(y)$ are nonnegative vectors. Now, from Proposition \ref{gurvits result2}, $\lambda(c) = (1, 0, 0, \ldots, 0)^T = \lambda(x + y) \prec \lambda(x) + \lambda(y)$. Then, by the definition of majorization in $\Rn$, $\lambda(x) + \lambda(y) = (1, 0, 0, \ldots, 0)^T = \lambda(c)$. Furthermore,  nonnegativity of $\lambda(x)$ and $\lambda(y)$ imply that $\lambda(x) = t\lambda(c)$ and $\lambda(y) = (1-t)\lambda(c)$ for some $t \in [0,1]$. It follows that $\norm{x} = t$ and $\norm{y} = 1-t$. Now,
	\[ 1 = \norm{c} = \norm{x + y} \leq \norm{x} + \norm{y} = t + (1-t) = 1. \]
	By the strict convexity of the (inner product) norm, $x$ and $y$ are nonnegative multiples of $c$. This shows that $c$ is an extreme direction of $\lplus$. That $c$ is exposed comes from a result of Renegar \cite[Theorem 23]{renegar2006hyperbolic}: \emph{All boundary faces of $\lplus$ are exposed.}
\end{proof}

Now consider a hyperbolic system $(\V,p,e)$ with hyperbolicity cone $\lplus$; let $q$ be the corresponding minimal polynomial  (so $q$ is hyperbolic in direction $e$, induces $\lplus$, and has the least degree among such polynomials). As mentioned in the Introduction, thanks to the result of Helton and Vinnikov \cite{helton2007lmi}, $q$ is unique up to a positive constant and divides $p$. Let us call $(\V,q,e)$ \emph{the minimal system} of $(\V, p, e)$ and $q$ a \emph{minimal polynomial}. Given a hyperbolic system $(\V,p,e)$, the question of when $p$ is minimal becomes interesting. In Theorem \ref{condition for minimality}, we provide an answer - thereby improving a result on ROG-cones \cite{ito2023automorphisms}. First, we cover some preliminary results.

\begin{proposition} \label{minimal system}
	Suppose $q$ is a minimal polynomial of the hyperbolic system $(\V,p,e)$. Then, the following hold:
	\begin{itemize}
		\item[$(i)$] $p$ is complete if and only if $q$ is complete. 
		\item[$(ii)$] For any $x \in \V$, $\lambda_q(x)$ is a `subvector' of $\lambda_p(x)$; furthermore, $\rank_q(x) \leq \rank_p(x)$.
	\end{itemize}
\end{proposition}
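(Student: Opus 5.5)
The plan is to write $p = q\,r$, where $r$ is a homogeneous polynomial, using the Helton--Vinnikov divisibility result recalled above. Since $p(e)\neq 0$, both $q(e)\neq 0$ and $r(e)\neq 0$. For any $x\in\V$ we have $p(te-x)=q(te-x)\,r(te-x)$ as polynomials in $t$. The left side has only real roots, so every root of $q(te-x)$ and of $r(te-x)$ is real. In particular, $r$ is itself hyperbolic in direction $e$ (with the degenerate case $\deg r=0$ allowed).

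For item (ii), the multiset of roots of $p(te-x)$ is the union of the root multisets of $q(te-x)$ and $r(te-x)$. Since $\lambda_q(x)$ lists the roots of the first factor in decreasing order, it is a subvector of $\lambda_p(x)$, obtained by deleting the entries of $\lambda_r(x)$. The number of nonzero entries can only drop when entries are deleted, so $\rank_q(x)\le\rank_p(x)$. One could also phrase this through $\deg_t q(e+tx)\le \deg_t p(e+tx)$, but the root count is cleaner.

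For item (i), I would use the facts listed in the preliminaries: $\lambda(x)=0$ exactly when $x\in\lplus\cap(-\lplus)$, and completeness is equivalent to pointedness of the hyperbolicity cone. Since $q$ induces the same cone $\lplus$ in direction $e$, we get $\lambda_p(x)=0 \iff x\in \lplus\cap(-\lplus)\iff \lambda_q(x)=0$. Hence $p$ is complete iff $\lplus$ is pointed iff $q$ is complete. Alternatively, (ii) gives one direction directly: if $\lambda_p(x)=0$, then $\lambda_q(x)=0$.

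The only delicate point is confirming that the cones agree, i.e.\ that $\lplus(q,e)=\lplus(p,e)$. This holds by the definition of the minimal polynomial, so no real obstacle is expected.
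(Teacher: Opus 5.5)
Your proposal is correct and follows the paper's proof essentially verbatim. Part (i) uses the equivalence of completeness with pointedness of the common cone $\lplus(p,e)=\lplus(q,e)$, and part (ii) uses the Helton--Vinnikov factorization $p=qh$, so that the roots of $q(te-x)$ form a sub-multiset of those of $p(te-x)$; your extra remarks (e.g.\ that $q(e)\neq 0$ makes $q(te-x)$ have exactly $\deg q$ real roots) just spell out details the paper leaves implicit.
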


\begin{proof}
	$(i)$ This follows from the fact that completeness is equivalent to the pointedness of the hyperbolicity cone; see Section 2.\\
	$(ii)$ Thanks to Helton and Vinnikov \cite{helton2007lmi}, we can write $p=qh$ for some  polynomial $h$. As $p(te-x)=q(te-x)\,h(te-x)$, we see that the eigenvalues of $x$ relative to $q$ (including multiplicity) form a subset of the eigenvalues of $x$ relative to $p$. Hence, $(ii)$ holds.
\end{proof}

\begin{proposition} \label{hereditary property of rank-one element}
	Consider a hyperbolic system $(\V,p,e)$ of degree $n$. Corresponding to $p$, let $q$ and $p^\prime$ denote, respectively, its minimal polynomial and derivative polynomials. Let $c\in \lplus$ be a rank-one element in $(\V,p,e)$. Then the following statements hold:
	\begin{itemize}
		\item[$(a)$] For any $d \in \Lambda_{++}$, $c$ is a rank-one element in $(\V,p,d)$.
		\item[$(b)$]  $c$ is a rank-one element in $(\V,q,e)$; in fact, except for possibly having different sizes/dimensions,  $\lambda_q(c)=(\alpha,0,0,\ldots, 0)^T$ when $\lambda_p(c)=(\alpha,0,0,\ldots, 0)^T$.
		\item[$(c)$] When $n\geq 2$, $c$ is a rank-one element in $(\V,p^\prime,e)$. 
	\end{itemize}
\end{proposition}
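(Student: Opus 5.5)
The plan is to handle the three items separately, each time reducing to how the relevant univariate polynomial in $t$ factors.

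\emph{Item $(a)$.} By Definition \ref{rank def}, $\rank(c)$ equals the degree of $t\mapsto p(e+tc)$. By the remark after that definition (Gurvits, Renegar), this degree does not change when $e$ is replaced by any $d\in\Lambda_{++}$. So $c$ has exactly one nonzero eigenvalue relative to $(p,d)$. That eigenvalue is positive because $c\in\lplus=\lplus(p,d)$. Hence $c$ is a rank-one element of $\lplus(p,d)$.

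\emph{Item $(b)$.} By Proposition \ref{minimal system}$(ii)$, $\lambda_q(c)$ is a subvector of $\lambda_p(c)=(\alpha,0,\ldots,0)^T$, so $\rank_q(c)\le 1$. It remains to exclude $\lambda_q(c)=0$. In that case $c\in\lplus\cap(-\lplus)$, since $q$ induces the same cone $\lplus$. By the facts recalled in Section 2, this intersection is exactly the set where $\lambda_p$ vanishes. That would force $\lambda_p(c)=0$, contradicting $\alpha>0$. Thus $\lambda_q(c)$ has a single nonzero entry. Because $\lambda_q(c)$ is a subvector of $\lambda_p(c)$, that entry must be $\alpha$, giving $\lambda_q(c)=(\alpha,0,\ldots,0)^T$ up to length.

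\emph{Item $(c)$.} This is the only real computation. From $\lambda_p(c)=(\alpha,0,\ldots,0)^T$ we get $p(te-c)=p(e)\,(t-\alpha)t^{n-1}$. Translating and using homogeneity gives
\[
p(se+(te-c))=p(e)\,(s+t-\alpha)(s+t)^{n-1}.
\]
Differentiating in $s$ at $s=0$ yields
\[
p'(te-c)=p(e)\big[t^{n-1}+(n-1)(t-\alpha)t^{n-2}\big]=p(e)\,t^{n-2}\big(nt-(n-1)\alpha\big).
\]
So the eigenvalues of $c$ relative to $(p',e)$ are $\tfrac{n-1}{n}\alpha$, which is positive for $n\ge 2$, together with $n-2$ zeros. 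Hence $c$ is rank-one in $(\V,p',e)$, and it lies in $\lplus'\supseteq\lplus$.

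The main obstacle is the nonvanishing step in $(b)$. There one must make sure that $\lambda_q(c)=0$ really forces $\lambda_p(c)=0$, and the argument for this goes through the shared lineality space of the common cone $\lplus$.
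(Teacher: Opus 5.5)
Your proof is correct. Parts $(a)$ and $(b)$ are essentially the paper's argument. For $(a)$ you use direction-independence of rank. For $(b)$ you use the subvector property of $\lambda_q(c)$, together with the common lineality space $\lplus\cap(-\lplus)$ to exclude $\lambda_q(c)=0$.

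For $(c)$ you take a different route. The paper uses the interlacing $\lambda_1(c)\ge\lambda_1'(c)\ge\lambda_2(c)\ge\cdots\ge\lambda_{n-1}'(c)\ge\lambda_n(c)$ together with the characterization of when equality occurs in it. From this it obtains only $\alpha>\lambda_1'(c)>0$, with the remaining entries zero. You instead observe, via translation and homogeneity, that $p'(te-c)=\frac{d}{dt}\,p(te-c)$. You then differentiate the explicit characteristic polynomial $p(e)(t-\alpha)t^{n-1}$, which gives the exact spectrum $\lambda'(c)=\big(\tfrac{n-1}{n}\alpha,0,\ldots,0\big)^T$. This is self-contained, since it needs no appeal to interlacing results. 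It is also sharper: it pins down $\lambda_1'(c)$, which explains, for example, the value $\tfrac34$ in Example \ref{example4.4}.

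What the paper's interlacing argument buys is reusability. It carries over directly to elements of any rank $k$, and run in reverse, to the passage from $p'$ back to $p$ in Proposition \ref{rank preservation}$(b)$, where no closed-form spectrum is available. Your computation does extend to the forward direction for rank $k$. The derivative of $t^{n-k}g(t)$, with $g(0)\neq 0$, is $t^{n-k-1}\big[(n-k)g(t)+tg'(t)\big]$, and the bracket does not vanish at $0$. The reverse direction, however, genuinely needs interlacing.
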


\begin{proof} 
	$(a)$ This comes from  \cite[Proposition 22]{renegar2006hyperbolic}, where it is shown that the rank of an element $x\in \lplus$ is independent of direction $d\in \Lambda_{++}$.
	
	$(b)$ As $c$ is of rank-one in $(\V,p,e)$, $\lambda_p(c)=(\alpha, 0, \ldots, 0)^T$, where $\alpha>0$. Recall from \cite[Theorem 3]{garding1959inequality} that for an $x\in \V$, $\lambda_p(x) = 0$ if and only if $x \in \Lambda_{+}(p, e) \cap (-\Lambda_{+}(p, e))$. Since $\Lambda_{+}(p, e) = \Lambda_{+}(q, e)$, we have $\Lambda_{+}(p, e) \cap (-\Lambda_{+}(p, e)) = \Lambda_{+}(q, e) \cap (-\Lambda_{+}(q, e))$; thus $\lambda_q(c) \neq 0$. Since $\lambda_q(c)$ is a `subvector' of $\lambda_p(c)$, we see that $\alpha$ appears exactly once (with all other entries zero) in $\lambda_q(c)$. This shows that $c$ is of rank one in $(\V, q, e)$.
	
	$(c)$ For $n\geq 3$, this comes from \cite[Theorem 12]{renegar2006hyperbolic}, where it is proved in terms of the multiplicity function, $\text{mult}(x) \coloneq n-\rank(x)$. We repeat the proof here (which works even for $n=2$), highlighting its elementary and useful argument.  Let $\lambda^\prime$ be the eigenvalue map corresponding to $(\V,p^\prime,e)$. By the interlacing property between the eigenvalues relative to $\lambda$ and $\lambda^\prime$ (see \cite[page 67]{renegar2006hyperbolic}) we have:
	\[ \lambda_1(c) \geq \lambda_1^{\prime}(c) \geq \lambda_2(c) \geq \cdots \geq \lambda_{n-1}^{\prime}(c) \geq \lambda_n(c), \]
	with
	\begin{equation}\label{equality in interlacing}
		\big[ \lambda_j(c) = \lambda_{j}^\prime(c) \,\, \text{or} \,\, \lambda_{j}^\prime(c) = \lambda_{j+1}(c) \big] \iff \lambda_j(c) = \lambda_{j}^\prime(c) = \lambda_{j+1}(c).
	\end{equation}
	When $\lambda(c)=(\alpha,0,0\ldots,0)^T$, we have
	\[ \alpha \geq \lambda_1^{\prime}(c) \geq 0 \geq \lambda_2^\prime(c) \geq 0 \cdots \geq \lambda_{n-1}^{\prime}(c) \geq 0. \]
	Then $\lambda_2^\prime(c) = \lambda_3^\prime(c) = \cdots = \lambda_{n-1}^\prime(c) = 0$, and because of \eqref{equality in interlacing}, $\alpha > \lambda_1^\prime(c) > 0$.  Letting $\beta = \lambda_1^\prime(c)>0$, 
	we have $\lambda^\prime(c) = (\beta, 0, 0, \ldots, 0)^T$. So $c$ is a rank-one element in $(\V, p^\prime, e)$.
\end{proof}

While Item $(c)$ in the above result is for a rank-one element, we state a broader result as follows. This is essentially a restatement of \cite[Theorem 12]{renegar2006hyperbolic} in terms of rank.

\begin{proposition} \label{rank preservation}
	Let $(\V, p, e)$ be a hyperbolic system of degree $n$. 
	\begin{itemize}
		\item[$(a)$] Suppose $c \in \lplus$ has rank $k$ in $(\V,p,e)$, where $1 \leq k \leq n-1$. Then $c \in \lplus^{\prime}$ and has rank $k$ in $(\V,p^\prime,e)$.
		\item[$(b)$] Suppose $c \in \lplus^{\prime}$ has rank $k$ in $(\V,p^\prime,e)$, where $1 \leq k \leq n-3$. Then $c \in \lplus$ and has rank $k$ in $(\V,p,e)$.
	\end{itemize}
\end{proposition}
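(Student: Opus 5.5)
The plan is to use the interlacing between $\lambda=\lambda_{(p,e)}$ and $\lambda'=\lambda_{(p',e)}$, exactly as in the proof of Item $(c)$ of Proposition \ref{hereditary property of rank-one element}. For every $x\in\V$ we have
\[ \lambda_1(x)\ge\lambda_1'(x)\ge\lambda_2(x)\ge\cdots\ge\lambda_{n-1}'(x)\ge\lambda_n(x), \]
together with the strictness rule \eqref{equality in interlacing}: an equality $\lambda_j=\lambda_j'$ or $\lambda_j'=\lambda_{j+1}$ forces $\lambda_j=\lambda_j'=\lambda_{j+1}$. The whole argument is bookkeeping of where zeros can sit in these two vectors.

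\emph{Part $(a)$.} Let $c\in\lplus$ have rank $k$ with $1\le k\le n-1$. Then $\lambda(c)=(r_1,\dots,r_k,0,\dots,0)^T$ with $r_i>0$. Since $\lplus\subseteq\lplus'$, we get $c\in\lplus'$, so $\lambda'(c)\ge 0$.

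For $j\ge k+1$, interlacing squeezes $\lambda_j'(c)$ between $\lambda_j(c)=0$ and $\lambda_{j+1}(c)=0$. Hence $\lambda'_{k+1}(c)=\cdots=\lambda'_{n-1}(c)=0$, which gives $n-1-k$ zeros.

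For $j\le k-1$, we have $\lambda_j'(c)\ge\lambda_{j+1}(c)>0$.

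It remains to handle $j=k$, where $\lambda_k(c)=r_k>0=\lambda_{k+1}(c)$. Because these two values differ, the strictness rule gives $\lambda_k'(c)>\lambda_{k+1}(c)=0$. Therefore $\lambda'(c)$ has exactly $k$ positive entries, so $\rank'(c)=k$. This uses $k\le n-1$, so that $\lambda_{k+1}$ exists.

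\emph{Part $(b)$.} This direction is the main obstacle, since $\lplus'$ is strictly larger than $\lplus$ and we must recover membership $c\in\lplus$.

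Let $c\in\lplus'$ have rank $k\le n-3$ in $(\V,p',e)$. Then $\lambda'(c)=(\beta_1,\dots,\beta_k,0,\dots,0)^T$ with $\beta_i>0$, and at least two zeros, namely $\lambda'_{k+1}(c)=\lambda'_{k+2}(c)=0$ (the last index is $n-1\ge k+2$).

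First I use the strictness rule on the neighbouring zeros. For each $j$ with $k+1\le j\le n-2$, the chain $\lambda_j'\ge\lambda_{j+1}\ge\lambda_{j+1}'$ with $\lambda'_j=\lambda'_{j+1}=0$ gives $\lambda_{j+1}(c)=0$. Then $\lambda'_j=\lambda_{j+1}$, so \eqref{equality in interlacing} forces $\lambda_j(c)=0$ as well. Letting $j$ run over this range yields $\lambda_{k+1}(c)=\cdots=\lambda_{n-1}(c)=0$.

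Next I show $\lambda_n(c)=0$. Apply the strictness rule at $j=n-1$: the condition $\lambda_{n-1}(c)=0=\lambda'_{n-1}(c)$ is an equality $\lambda_{n-1}=\lambda'_{n-1}$, so it forces $\lambda_n(c)=\lambda'_{n-1}(c)=0$. This step needs $k+1\le n-2$, so that a pair of consecutive primed zeros exists and $\lambda_{n-1}(c)=0$ was obtained above; this is exactly why the hypothesis is $k\le n-3$ rather than $k\le n-2$.

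Finally, for $i\le k$ we have $\lambda_i(c)\ge\lambda_i'(c)>0$, and $\lambda_k(c)\ge\lambda_k'(c)>0$ directly. So $\lambda(c)\ge 0$, which means $c\in\lplus$, and $c$ has exactly $k$ nonzero eigenvalues, so its rank in $(\V,p,e)$ is $k$.

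As a cross-check of the zero count, this agrees with Renegar's multiplicity statement: an eigenvalue $0$ of multiplicity $m\ge 2$ for $p'$ comes from multiplicity $m+1$ for $p$.
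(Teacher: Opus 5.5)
Your proof is correct and takes essentially the same approach as the paper: both parts rest on the interlacing $\lambda_1\ge\lambda_1'\ge\lambda_2\ge\cdots\ge\lambda_{n-1}'\ge\lambda_n$ together with the equality rule \eqref{equality in interlacing}, and your part $(a)$ matches the paper's. In part $(b)$ you sweep over all $k+1\le j\le n-2$ to get $\lambda_{k+1}(c)=\cdots=\lambda_{n-1}(c)=0$ directly. The paper instead uses $\lambda'_{n-2}=\lambda'_{n-1}=0$ to get $\lambda_{n-1}(c)=\lambda_n(c)=0$ (so $c\in\lplus$), then uses $\lambda'_{k+1}=\lambda'_{k+2}=0$ to get $\lambda_{k+1}(c)=0$, and lets monotonicity and nonnegativity handle the rest; the difference is only bookkeeping.
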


\begin{proof}
	$(a)$ Since $\lplus \subseteq \lplus^{\prime}$ we have $c\in \lplus^{\prime}$. Now, as $c$ has rank $k$ in $(\V,p,e)$, $\lambda_{k}(c) > 0$ and $0=\lambda_{k+1}(c) = \lambda_{k+2}=\cdots=\lambda_{n}(c)$. Then, we must have $\lambda'_{k}(c) > 0$, otherwise the interlacing property along with \eqref{equality in interlacing} would imply $\lambda_{k}(c) = 0$, yielding a contradiction. Since $\lambda_{k+1}(c)=0\geq \lambda_{k+1}^\prime(c)\geq \cdots \geq \lambda_{n}(c)\geq 0$, we conclude that $c$ has rank $k$ in $(\V, p^\prime, e)$.
	
	$(b)$ As $1\leq k\leq n-3$, we have $\lambda'_{n-2}(c) = \lambda'_{n-1}(c) = 0$. Thus we have $\lambda_{n-1}(c) = 0$ by the interlacing property. Since we now have $\lambda_{n-1}(c) = \lambda'_{n-1}(c) = 0$, from  \eqref{equality in interlacing} we observe that $\lambda_n(c) = 0$. This shows that $c \in \lplus$. We now show that $c$ has rank $k$ in $(\V,p,e)$. Note that $\lambda'_{k}(c) > 0$ and $\lambda'_{k+1}(c) = \lambda'_{k+2}(c) = 0$. Since $\lambda_k(c) \geq \lambda'_k(c) > 0$, we only need to show that $\lambda_{k+1}(c) = 0$. Indeed, $\lambda_{k+2}(c) = 0$ by the interlacing property and, since $\lambda'_{k+1}(c) = \lambda_{k+2}(c) = 0$, \eqref{equality in interlacing} forces $\lambda_{k+1}(c) = 0$ as well.
\end{proof}

We end this section by stating an important result on the minimality of $p$.

\begin{theorem} \label{condition for minimality}
	\textup{(Minimality of $p$ and $p^\prime$)}
	Suppose $(\V,p,e)$ is a hyperbolic system of degree $n$ in which there exists a finite set $\{c_1,c_2,\ldots, c_k\}$ of elements in $\lplus$ such that
	\begin{itemize}
		\item each $c_i$ is of rank one, and
		\item $c_1 + c_2 + \cdots + c_k \in \Lambda_{++}$.
	\end{itemize}
	Then $p$ is minimal. Moreover, when $n\geq 2$, $p^\prime$ is minimal and $\lplus$ is strictly contained in $\lplus^\prime$.
\end{theorem}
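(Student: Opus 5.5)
Let $q$ be the minimal polynomial of $(\V,p,e)$, so $p=qh$ with $\deg q=m\le n$ by Helton--Vinnikov. The goal is to show $m=n$. The tool is rank: by Proposition \ref{hereditary property of rank-one element}(b), each $c_i$ is also rank-one in $(\V,q,e)$. Put $d:=c_1+\cdots+c_k\in\Lambda_{++}$; since $\lplus(p,e)=\lplus(q,e)$, $d$ lies in the interior for $q$ as well. Compare the two ranks of $d$. Since $d\in\Lambda_{++}$, $\lambda_p(d)>0$, so $\rank_p(d)=n$; likewise $\rank_q(d)=m$.

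The plan is to compute $\rank_p(d)$ via the rank-one elements and check it agrees with $q$. Subadditivity (Proposition \ref{ito2023automorphisms}) only gives $n\le k$, which is not enough. Instead I would use the Gurvits representation (Proposition \ref{gurvits result1}) more strongly, or rather the following polynomial identity. Since $\lambda_p(c_i)=(\alpha_i,0,\ldots,0)$ and $\lambda_q(c_i)=(\alpha_i,0,\ldots,0)$ with the same $\alpha_i$, we get, for $t$ real,
\[ p(e+tc_i)=p(e)(1+t\alpha_i)\quad\text{and}\quad q(e+tc_i)=q(e)(1+t\alpha_i). \]
Hence $h(e+tc_i)=h(e)$ is constant along every line $e+\R c_i$. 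The key step is to upgrade this to constancy of $h$ along $e+\R d$, or more directly to show that $h$ is a hyperbolic polynomial with $\lambda_h(c_i)=0$ for all $i$. Note that $h$ is hyperbolic in direction $e$, because $p(te-x)$ factors and all of its roots are real. Also $\lambda_h(c_i)=0$, since the roots of $h(te-c_i)$ are exactly the eigenvalues of $c_i$ for $p$ that are missing from those for $q$, and these are all zeros. So each $c_i$ lies in $\lplus(h,e)\cap(-\lplus(h,e))$, which is a subspace (the lineality space of a convex cone). Therefore $d$ lies in this subspace, and $\lambda_h(d)=0$. But $\lambda_h(d)$ consists of the eigenvalues of $d$ for $p$ left over after removing those for $q$, and all of these are positive. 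This forces $\deg h=0$, so $p$ is minimal.

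For $p'$: by Proposition \ref{hereditary property of rank-one element}(c), each $c_i$ is rank-one in $(\V,p',e)$ (this uses $n\ge2$), and $d\in\lplus\subseteq\lplus'$. The point to settle is that $d$ lies in the interior of $\lplus'$. I would get this from interlacing: $\lambda'_j(d)\ge\lambda_{j+1}(d)>0$. So the hypotheses hold for $p'$, and the same argument gives minimality of $p'$. For the strict containment, if $\lplus=\lplus'$ then the minimal polynomials would coincide up to a constant. Since both $p$ and $p'$ are minimal, they would have equal degrees, which is impossible because $\deg p'=n-1$. (When $n-1=0$ the same comparison still applies, since $p'$ is then a constant.)

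The main obstacle is the step involving $h$, specifically the claim that $\lambda_h(c_i)$ consists exactly of the eigenvalues left over from $\lambda_p(c_i)$ after removing $\lambda_q(c_i)$. This follows from the factorization of $p(te-x)$. The other point needing care is that the lineality space of the hyperbolicity cone of $h$ is the zero set of $\lambda_h$; this is the result from \cite{garding1959inequality} quoted in Section 2.
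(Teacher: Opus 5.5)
Your proof is correct, but for the minimality of $p$ it takes a different route from the paper.

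\textbf{The paper's route.} The paper switches to the direction $d=c_1+\cdots+c_k$, using the direction-independence of rank in Proposition~\ref{hereditary property of rank-one element}(a). It then counts traces. In $(\V,p,d)$ one has $\tr(d)=n$ and $\tr(c_i)=\alpha_i$, so $n=\sum_i\alpha_i$. The same computation in $(\V,q,d)$, with the same $\alpha_i$ by part~(b), gives $m=\sum_i\alpha_i$.

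\textbf{Your route.} You stay in direction $e$ and write $p=qh$. The cofactor $h$ is itself hyperbolic in direction $e$, and $\lambda_h(x)$ is the multiset difference of $\lambda_p(x)$ and $\lambda_q(x)$. Hence $\lambda_h(c_i)=0$, which places every $c_i$ in the lineality space $\lplus(h,e)\cap(-\lplus(h,e))$. Since that set is a subspace, $d$ lies in it as well. This contradicts $\lambda_h(d)$ being a nonempty subvector of $\lambda_p(d)>0$ unless $\deg h=0$. You should state explicitly that $h$ is homogeneous (as a factor of a homogeneous polynomial) and that $h(e)\neq 0$ (from $p(e)=q(e)h(e)\neq0$), since both are needed for $h$ to be hyperbolic.

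\textbf{What each buys.} Your argument needs only Helton--Vinnikov divisibility and G{\aa}rding's convexity and lineality facts. It avoids both the change of direction and the trace/semi-inner-product machinery. The paper's counting argument is quantitative: it yields the identity $n=\sum_i\alpha_i$ (eigenvalues taken in direction $d$). That identity is exactly what is reused in Theorem~\ref{thm on scaled JF} to obtain $k\ge n$ and to characterize when a scaled Jordan frame is a Jordan frame.

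\textbf{The $p'$ part.} Here you essentially follow the paper, with two minor differences. You justify $d\in\Lambda'_{++}$ by interlacing rather than by $\lplus\subseteq\lplus'$. For strict containment you use uniqueness of the minimal polynomial, where the paper simply notes that $p'$ would contradict the minimality of $p$. Your parenthetical about $n-1=0$ is unnecessary, since that part of the statement assumes $n\ge 2$.
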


\begin{proof} 
	First, we show that $p$ (with degree $n$) is minimal. Let $q$ be the minimal polynomial corresponding to $(\V,p,e)$; let $m$ be its degree.  We will show that $m=n$. Let $d \coloneq c_1 + c_2 + \cdots + c_k\in \Lambda_{++}$ and $\mu$ be the eigenvalue map corresponding to $(\V, p, d)$. By our previous result, each $c_i$ has rank one in the system $(\V,p,d)$; let $\mu(c_i) = (\alpha_i, 0, \ldots, 0)^T\in \Rn$, where $\alpha_i > 0$. 
	Now, in the hyperbolic system $(\V,p,d)$ and the induced (semi-)inner product, $\tr(c_i) = \ip{d}{c_i} = \alpha_i$ for all $i$; hence, by the linearity of the trace, 
	\begin{equation}\label{trace equality} n = \tr(d) = \sum_{i=1}^{k} \tr(c_i) = \sum_{i=1}^{k}\alpha_i. 
	\end{equation}
	(This also follows from  Proposition \ref{gurvits result2}: $\one_n = \mu(d) \prec \sum_{i=1}^{k}\mu(c_i)$.)
	
	Consider the system $(\V, q, d)$ with its eigenvalue map $\nu$. By the previous result, $\nu(c_i)=(\alpha_i, 0, \ldots, 0)^T \in \R^m$. Noting that the argument used in the derivation of (\ref{trace equality}) works in $(\V,q,d)$ as well, we get $m = \sum_{i=1}^{k} \alpha_i$.  Thus $m=n$, proving the minimality of $p$. 
	
	Now we prove the minimality of $p^\prime$ when $n\geq 2$. By the previous result, each $c_i$ continues to be of rank one in $(\V,p^\prime,e)$, and the sum of all these, namely, $d$, is in $\Lambda_{++}$. As $\lplus \subseteq \lplus^\prime$, see \cite{renegar2006hyperbolic}, we have $\Lambda_{++}\subseteq \Lambda^\prime_{++}$. By the first part, $p^\prime$ is minimal.
	
	Finally, if $\lplus = \lplus^\prime$, then $p$ and $p^\prime$ generate the same hyperbolicity cone. This contradicts the minimality of $p$ as $p^\prime$ has degree less than that of $p$. Hence, $\lplus$ is strictly contained in $\lplus^\prime$. 
\end{proof}

\section{(Scaled) Jordan frames}

Motivated by Theorem \ref{condition for minimality}, in this section, we introduce the concept of a (scaled) Jordan frame. The usage of the phrase `Jordan frame' as opposed to just a `frame' is intentional for two reasons. First, the concept of a `frame' exists in the mathematical literature (in a finite-dimensional setting, it is a set that spans the entire space). Second, as shown below, our `Jordan frame' in a hyperbolic setting reduces to that of a Jordan frame in a Euclidean Jordan algebra.

\begin{definition}
	In a hyperbolic system $(\V,p,e)$, a finite set $\{c_1, c_2, \ldots, c_k\}$ of elements in $\lplus$ is said to be a 
	\begin{itemize}
		\item \emph{Jordan frame} if each $c_i$ is a primitive idempotent and 
		\[ c_1 + c_2 + \cdots + c_k = e; \]
		\item \emph{scaled Jordan frame} if each $c_i$ is a rank-one element and 
		\[ c_1 + c_2 + \cdots + c_k = d\in \Lambda_{++}. \]
	\end{itemize}
\end{definition}

\begin{remark}
	Note that in a scaled Jordan frame, objects may repeat or be proportional to each other. However, as we see below, $k\geq n$. We also see that a Jordan frame is necessarily orthonormal (hence linearly independent) with $n$ elements.
\end{remark}

\begin{remark}
	The existence of a (scaled) Jordan frame is not always guaranteed; see Example \ref{example3.2}. However, as our proof of Corollary \ref{ito2023automorphisms corollary} shows, in a complete hyperbolic system where the hyperbolicity cone is ROG, there are always scaled Jordan frames.
	\emph{We also observe that a system-automorphism takes a primitive idempotent to a primitive idempotent} and (when the system is complete) takes a Jordan frame to a Jordan frame. Moreover, \emph{when $p$ is minimal, every cone-automorphism maps a scaled Jordan frame to a scaled Jordan frame}; see Proposition \ref{prop properties of automorphism}.
\end{remark}

\subsection{Properties of a scaled Jordan frame}

\begin{theorem}\label{thm on scaled JF}
	Suppose in a hyperbolic system $(\V, p, e)$ of degree $n$,  $\{c_1, c_2, \ldots, c_k\}$ is a scaled Jordan frame. Then $k \geq n$. Moreover, $k = n$ if and only if  $\{c_1, c_2, \ldots, c_k\}$ is a Jordan frame in $(\V, p, d),$ where $d \coloneq c_1+c_2+\cdots+c_k$.
\end{theorem}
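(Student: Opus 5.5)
We work throughout in the system $(\V,p,d)$, where $d \coloneq c_1+\cdots+c_k \in \Lambda_{++}$, and write $\mu$ for its eigenvalue map. By Proposition \ref{hereditary property of rank-one element}$(a)$, each $c_i$ is still rank one in $(\V,p,d)$, so $\mu(c_i)=(\alpha_i,0,\ldots,0)^T$ with $\alpha_i>0$. The identity \eqref{trace equality} from the proof of Theorem \ref{condition for minimality} gives
\[ n = \sum_{i=1}^k \alpha_i. \]
The plan is to show that each $\alpha_i \le 1$. This yields $k \ge \sum_i \alpha_i = n$ at once, and equality $k=n$ then forces every $\alpha_i=1$.

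To bound $\alpha_i$, use the order $c_i \le d$: indeed $d-c_i=\sum_{j\ne i}c_j \in \lplus(p,d)=\lplus(p,e)$. By Proposition \ref{gurvits result2}$(c)$ (applied in $(\V,p,d)$), $\mu(c_i)\le \mu(d)=\one$ componentwise, so $\alpha_i=\mu_1(c_i)\le 1$. Hence $k\ge n$.

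For the ``if and only if'', suppose first that $k=n$. Then $\sum_i\alpha_i=n$ with each $\alpha_i\le 1$ and exactly $n$ terms, so all $\alpha_i=1$. Each $c_i$ is therefore a primitive idempotent in $(\V,p,d)$ with $\sum c_i=d$, i.e.\ a Jordan frame there. Conversely, suppose $\{c_1,\dots,c_k\}$ is a Jordan frame in $(\V,p,d)$. Then each $\alpha_i=1$, and $n=\sum\alpha_i=k$.

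The only delicate points are to check that the hyperbolicity cone is unchanged when the direction is switched to $d$, which follows from the fact recalled in Section 2, and that rank one is preserved under this switch, which is Proposition \ref{hereditary property of rank-one element}. Once both are in place, the argument is the elementary counting above. Orthonormality of a Jordan frame is not needed here, though it could also be derived from the norm computation $\norm{d}^2=n=\sum_{i,j}\ip{c_i}{c_j}$.
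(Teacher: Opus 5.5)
Your proof is correct. It follows the paper's skeleton: pass to $(\V,p,d)$, keep rank one via Proposition~\ref{hereditary property of rank-one element}$(a)$, use $n=\sum_i\alpha_i$, then show each $\alpha_i\le 1$. The difference is in how you get the key bound $\alpha_i\le 1$.

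The paper expands the trace in the semi-inner product of $(\V,p,d)$,
\[ \alpha_i=\ip{d}{c_i}=\norm{c_i}^2+\sum_{j\neq i}\ip{c_i}{c_j}=\alpha_i^2+\sum_{j\neq i}\ip{c_i}{c_j}, \]
and discards the cross terms using the subdual property $\ip{c_i}{c_j}\ge 0$ (Theorem~\ref{SG theorem}). You instead use only the order structure: $c_i\le d$ in the common cone $\lplus(p,d)=\lplus(p,e)$, and monotonicity of eigenvalues (Proposition~\ref{gurvits result2}$(c)$) gives $\mu(c_i)\le\mu(d)=\one$.

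Your route is shorter and more transparent: a rank-one element dominated by $d$ cannot have an eigenvalue larger than $\mu_1(d)=1$.

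The paper's identity carries more information. It says the deficit $\alpha_i-\alpha_i^2$ equals the total overlap $\sum_{j\neq i}\ip{c_i}{c_j}$. So when $k=n$, it gives orthogonality of the frame in $(\V,p,d)$ at no extra cost; your closing remark recovers this, but only by bringing in subduality anyway. The same identity is reused later with $d=e$ to show that orthogonality together with $\sum_i c_i=e$ forces every $\alpha_i=1$. A one-sided bound from monotonicity does not give that conclusion.
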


\begin{proof}
	We first show that $k \geq n$.  As $\{c_1, c_2, \ldots, c_k\}$ is a scaled Jordan frame in $(\V, p, e)$, $d = c_1 + c_2 + \cdots + c_k \in \Lambda_{++}$. By Proposition \ref{hereditary property of rank-one element}, each $c_i$ has rank one in $(\V, p, d)$. Let $\mu$ be the eigenvalue map corresponding to $(\V, p, d)$ so $\mu(c_i) = (\alpha_i, 0, \ldots, 0)^T$, where $\alpha_i > 0$.  Working with the (semi-)inner product induced by $(\V,p,d)$, by (\ref{trace equality}), 
	\[ n = \tr(d) = \sum_{i=1}^{k} \tr(c_i) = \sum_{i=1}^{k}\alpha_i. \]
	Next, for all $i$,
	\begin{equation} \label{alphai equality}
		\alpha_i = \tr(c_i) = \ip{d}{c_i} = \norm{c_i}^2 + \sum_{j\neq i} \ip{c_i}{c_j} = \alpha_i^2 + \sum_{j \neq i} \ip{c_i}{c_j}. 
	\end{equation}
	As $\ip{c_i}{c_j} \geq 0$ for all $i, j$ (from Theorem \ref{SG theorem}), we see that $\alpha_i \geq \alpha_i^2$, i.e., $\alpha_i \leq 1$. Then
	\[ k =\sum_{i=1}^{k} 1\geq \sum_{i=1}^{k} \alpha_i = n. \]
	Now suppose $k=n$. From the above inequality, $\alpha_i = 1$ for all $i$. Thus, each $c_i$ is a primitive idempotent in $(\V, p, d)$. Since the sum of the $c_i$'s is $d$, we see  that $\{c_1,c_2,\ldots, c_k\}$ is a Jordan frame in $(\V, p, d)$.
	
	To see the reverse implication, suppose each $c_i$ is primitive in $(\V, p, d)$ so $\alpha_i=1$ for all $i$. Then, $n=\sum_{i=1}^{k}\alpha_i=k$. 
\end{proof}

\begin{definition} 
	In the setting of hyperbolic systems, we say that a property is \emph{hereditary} if the property carries over from a (hyperbolic) polynomial to its derivative polynomial.
\end{definition}

For example, when $n \geq 3$, the completeness of $p$ is hereditary. This is because, for $n \geq 3$, the regularity of $\lplus$ implies that of $\lplus^\prime$, see \cite[Proposition 13]{renegar2006hyperbolic}. We show below that the property of having a scaled Jordan frame is hereditary, whereas the property of having an ROG-cone is not hereditary.

\gap
Our next result deals with the $m$th derivative polynomial of $p$; see Section 2 for its definition.

\begin{theorem} \label{scaled JF property for the mth derivative}
	Suppose $(\V, p, e)$ is a hyperbolic system of degree $n \geq 2$ and $0 \leq m\leq n-1$. Then every scaled Jordan frame in $(\V, p, e)$ is a scaled Jordan frame in $(\V, p^{(m)}, e)$. Consequently, if $(\V,p,e)$ has a scaled Jordan frame, then $p^{(m)}$ is minimal. 
\end{theorem}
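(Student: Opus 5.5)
The plan is to argue by induction on $m$, using the one-step facts already established: Proposition \ref{hereditary property of rank-one element}$(c)$, Theorem \ref{condition for minimality}, and the inclusion $\lplus\subseteq\lplus^\prime$ together with its interior version $\Lambda_{++}\subseteq\Lambda_{++}^\prime$. The case $m=0$ is trivial, since $p^{(0)}=p$. Suppose then that $\{c_1,\ldots,c_k\}$ is a scaled Jordan frame in $(\V,p^{(j)},e)$ for some $j\le m-1$. Here $p^{(j)}$ is hyperbolic in direction $e$ and has degree $n-j\geq n-m+1\geq 2$. Because the degree is at least $2$, Proposition \ref{hereditary property of rank-one element}$(c)$ applies to the system $(\V,p^{(j)},e)$: each $c_i$, which is a rank-one element of $\Lambda_+(p^{(j)},e)$, is also a rank-one element of $(\V,p^{(j+1)},e)$.

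Next I will check that each $c_i$ lies in the new cone and that the sum stays interior. By the proposition of Renegar (stated earlier), $\Lambda_+(p^{(j)},e)\subseteq\Lambda_+(p^{(j+1)},e)$. Hence each $c_i$ lies in $\Lambda_+(p^{(j+1)},e)$, and its single nonzero eigenvalue is positive. For the sum, let $d=\sum c_i\in\Lambda_{++}(p^{(j)},e)$. The interior of a cone is contained in the interior of any cone containing it, so $d\in\Lambda_{++}(p^{(j+1)},e)$. Alternatively, interlacing gives directly that the eigenvalues of $d$ with respect to $p^{(j+1)}$ lie between positive numbers. Thus the set is a scaled Jordan frame in $(\V,p^{(j+1)},e)$, and the induction is complete. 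The steps are iterated exactly $m\le n-1$ times, each at degree at least $2$.

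For the minimality statement, I apply the first sentence of Theorem \ref{condition for minimality} to the hyperbolic system $(\V,p^{(m)},e)$. This system has a scaled Jordan frame by the previous paragraph, and that theorem's proof of the minimality of $p$ used only the existence of such a frame. One edge case remains: when $m=n-1$, the polynomial $p^{(m)}$ has degree $1$. It is still minimal, because any hyperbolic polynomial has degree at least $1$; the theorem also covers this case directly.

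The only step needing care is the degree bookkeeping. Proposition \ref{hereditary property of rank-one element}$(c)$ requires degree at least $2$ at each stage where it is applied, and the stages run over $j=0,\ldots,m-1$ with degree $n-j\ge 2$. We must also identify the derivative taken at each stage as the derivative in direction $e$, which matches the definition of $p^{(j+1)}=(p^{(j)})^\prime$.
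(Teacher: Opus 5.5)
Your proposal is correct and follows the paper's proof: induct using Proposition \ref{hereditary property of rank-one element}$(c)$ together with $\Lambda_{++}\subseteq\Lambda_{++}^\prime$ at each step, then apply Theorem \ref{condition for minimality} to $(\V,p^{(m)},e)$. You also make explicit two details the paper leaves implicit: the degree is $n-j\geq 2$ at every stage where Proposition \ref{hereditary property of rank-one element}$(c)$ is used, and the degree-one case $m=n-1$ is trivially minimal.
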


\begin{proof}
	Suppose $\{c_1,c_2,\ldots, c_k\}$ is a scaled Jordan frame in $(\V,p,e)$. Proposition \ref{hereditary property of rank-one element}(c) shows that each $c_i$ continues to be of rank one in $(\V,p^\prime,e)$; moreover, the sum of all these is in $\Lambda_{++}\subseteq \Lambda_{++}^\prime$. Thus, $\{c_1,c_2,\ldots, c_k\}$ is a scaled Jordan frame in $(\V,p^\prime,e)$. Now an induction argument shows that $\{c_1,c_2,\ldots, c_k\}$ is a scaled Jordan frame in any subsequent derivative system $(\V, p^{(m)}, e)$. Finally, the minimality of $p^{(m)}$ follows from Theorem \ref{condition for minimality}. 
\end{proof}

\begin{corollary} \label{ito2023automorphisms corollary} 
	\textup{\cite[Propositions 3.5 and 3.8]{ito2023automorphisms}}
	Suppose $(\V, p, e)$ is a hyperbolic system, where  $\lplus$ is an $\mathrm{ROG}$-cone. Then all the derivative systems $(\V, p^{(m)},e)$ are minimal. The same conclusion holds if $(\V,p,e)$ comes from a Euclidean Jordan algebra.
\end{corollary}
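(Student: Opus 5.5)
The plan is to reduce both claims to Theorem \ref{scaled JF property for the mth derivative}. That means producing a scaled Jordan frame in $(\V,p,e)$ whenever $\lplus$ is an ROG-cone, and separately whenever the system comes from a Euclidean Jordan algebra. Once such a frame exists, Theorem \ref{scaled JF property for the mth derivative} gives minimality of every $p^{(m)}$, $0\le m\le n-1$, including $p$ itself for $m=0$.

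\emph{The ROG case.} By definition an ROG-cone is pointed, so $p$ is complete, and $\lplus$ is a proper cone because $e\in\Lambda_{++}$. A proper closed convex cone is the conical hull of its extreme rays; this is the Krein--Milman / Minkowski theorem applied to a compact base such as $\set{x\in\lplus}{\ip{x}{e}=1}$. By the ROG hypothesis, each extreme ray is generated by a rank-one element of $\lplus$. Hence $e$ lies in the conical hull of rank-one elements, and Carathéodory's theorem yields finitely many rank-one elements $c_1,\ldots,c_k\in\lplus$ and positive scalars $t_i$ with $e=\sum t_i c_i$. Replacing $c_i$ by $t_ic_i$, which is still rank one since scaling by a positive number scales $\lambda$, we get $\sum c_i=e\in\Lambda_{++}$. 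This is a scaled Jordan frame (indeed a Jordan frame after checking traces, though that is not needed).

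The one point needing care is that the conical hull of the extreme rays is all of $\lplus$, and not merely dense in it. For a pointed closed cone this is exact: the base is compact convex, so it equals the convex hull of its extreme points in finite dimensions, and the extreme points of the base correspond to the extreme rays of the cone. I would also remark that nonemptiness of the interior is automatic here, since it contains $e$.

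\emph{The Euclidean Jordan algebra case.} Take any Jordan frame $\{e_1,\ldots,e_n\}$ of the algebra, for instance from the spectral decomposition of $e$. As noted in the Example above, algebraic primitive idempotents are exactly the primitive idempotents of the hyperbolic system $(\V,\det,e)$, so each $e_i$ has rank one and $\sum e_i=e\in\Lambda_{++}$. This is a (scaled) Jordan frame, and Theorem \ref{scaled JF property for the mth derivative} again applies. Alternatively, symmetric cones are ROG, since their extreme rays are spanned by primitive idempotents, so this case also follows from the first.

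I expect no real obstacle. The only substantive step is the convex-geometry fact that a proper cone is finitely generated at any given point by extreme rays, which is standard.
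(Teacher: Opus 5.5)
Your proposal is correct and follows the paper's proof. The paper likewise takes the compact base $\set{x\in\lplus}{\ip{x}{e}=1}$ of the proper cone and uses a Carath\'eodory-type theorem to write $e$ as a finite sum of rank-one elements, giving a scaled Jordan frame. It handles the Euclidean Jordan algebra case by writing the unit as a sum of primitive idempotents, and then applies Theorem \ref{scaled JF property for the mth derivative}.

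One correction concerns the side remark you marked as unnecessary. The frame you obtain need not be a Jordan frame: by Theorem \ref{thm on scaled JF}, that happens only when it has exactly $n$ elements. For a counterexample, take $p(x)=x_0^2-\|\bar x\|^2$ on $\R\times\R^2$ with $e=(1,0)$, so $n=2$. Three unit vectors $u_i$ summing to zero give rank-one elements $\tfrac13(1,u_i)$ whose sum is $e$. Each has eigenvalues $(\tfrac23,0)$, so none is a primitive idempotent.
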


\begin{proof} 
	We offer a proof based on Theorem \ref{scaled JF property for the mth derivative}.
	
	Suppose first that $\lplus$ is an ROG-cone. We show that $(\V,p,e)$ carries a scaled Jordan frame. For this, we use the following well-known result: In an $n$-dimensional inner product space, for a compact convex set $K$, every element $x\in K$ is a convex combination of at most $n+1$ extreme points; see \cite[page 84]{rudin1991functional}. Now, as our cone $\lplus$ is proper, we can take $K = \set{x \in \lplus}{\ip{x}{e} = 1}$ to be a base of $\lplus$ and see that every element in $\lplus$ is a convex combination of extreme directions (these come from the extreme points of $K$). Since $\lplus$ is an ROG-cone, these extreme directions come from rank-one elements; we see that $e$ is a finite sum of rank-one elements. Thus, $(\V,p,e)$ admits a scaled Jordan frame. An application of the above theorem shows that $(\V, p^{(m)},e)$ is minimal.
	
	In the case of a Euclidean Jordan algebra, the unit element is a sum of primitive idempotents. Again, we apply the above theorem.
\end{proof}

\begin{example}\label{elementary polynomial}
	On $\Rn$, $n\geq 2$, consider $p(x) \coloneq x_1x_2\cdots x_n$.  Consider the derivative polynomials of $p$, namely, the elementary symmetric polynomials 
	\[ E_{k}(x) = \quad \sum_{\mathclap{i_1 < i_2 < \cdots < i_k}} \quad x_{i_1} x_{i_2} \cdots x_{i_k}. \]   
	Now in the Euclidean Jordan algebra $\Rn$, the standard coordinate vectors are primitive idempotents, hence of rank one. Moreover, their sum is $\one$. By Theorem \ref{scaled JF property for the mth derivative} applied to $(\Rn, E_n,\one)$,  each $E_k$, $k\geq 1$, is minimal. We remark that the minimality of $E_k$ can also be deduced from \cite[Proposition 3.5]{ito2023automorphisms}.
\end{example}

\begin{remark}
	The existence of a scaled Jordan frame is a sufficient condition for the system to be minimal. Example \ref{example3.6} shows that one could have a minimal system without a scaled Jordan frame.
\end{remark}

\begin{example}
	Consider $\R^3$ with $p(x_1,x_2,x_3)=x_1x_2$ and $e=(1,1,0)$. Then $\lplus$ is not proper (hence cannot be an ROG-cone) yet has a (scaled) Jordan frame, namely, $\{e_1,e_2\}$. 
\end{example}

\begin{remark} (The ROG-cone property is not hereditary)
	\emph{If $n \geq 4$ and $\lplus$ is an $\mathrm{ROG}$-cone, then the derivative cone $\lplus^\prime$ can never be an $\mathrm{ROG}$-cone.} We see this as follows. Assume that both $\lplus$ and $\lplus^\prime$ are ROG-cones. Then, as per \cite[Proposition 3.5]{ito2023automorphisms}, $\lplus^\prime$ (which is proper) is strictly larger than $\lplus$. Hence, $\lplus^\prime$ contains an extreme direction that is not in $\lplus$. Now, from \cite[Theorem 12]{renegar2006hyperbolic}, all rank-one elements of $\lplus^\prime$ are already in $\lplus$; thus, this extreme direction can never be of rank one. This means that $\lplus^\prime$ can never be an ROG-cone. On the other hand, when $n = 3$, $\lplus^{\prime}$ may inherit the ROG-cone property from $\lplus$. To see this, consider $(\R^{3}, E_3, \one)$ and Proposition 3.12 along with the remarks at the end of p.\,255 of \cite{ito2023automorphisms}.
\end{remark}

We now describe some examples and results dealing with Jordan frames.

\begin{example} \label{example4.3}
	In a Euclidean Jordan algebra $\V$ of rank $n$ with unit $e$, let $\mathcal{C}$ be any set that contains a Jordan frame $\{e_1, e_2, \ldots, e_n\}$. Consider the hyperbolic system $(\W, q, e)$, where $\W$ is the span of $\mathcal{C}$ and $q(x) = \det(x)$. In  $(\W, q, e)$, $\{e_1, e_2, \ldots, e_n\}$ is a Jordan frame. 
\end{example}

Deviating from Example \ref{example4.3}, we provide the following.

\begin{example}
	Consider $\Rn$, where $n \geq 2$. With $\W$ denoting any real finite-dimensional vector space, let $\V \coloneq \Rn \times \W$. For any $v \in \V$, we write $v = (x, w)$, where $x = (x_1, x_2, \ldots, x_n)^T \in \Rn$ and $w \in \W$. We let $p(v) \coloneq x_1 x_2 \cdots x_n$ and $e = (\one_n, 0)$. Then, $(\V, p, e)$ is a hyperbolic system of degree $n$ with $\lambda(v) = x^\downarrow$. Moreover, any Jordan frame in this system is of the form $\set{(e_i, w_i)}{i = 1, 2, \ldots, n}$, where $\{e_1, e_2, \ldots, e_n\}$ is the standard coordinate system in $\Rn$ and $\sum_{i=1}^{n}w_i = 0$. Note that if $0 \neq w \in \W$, then $(0, w)$ cannot be written as a linear combination of elements in a Jordan frame. 
\end{example}

\begin{example} \label{example4.4}
	On $\R^4$, consider the elementary polynomial of degree $3$: $p(x) = x_1x_2x_3 + x_1x_2x_4 + x_1x_3x_4 + x_2x_3x_4$. With $e=\one$, $(\R^4, p, e)$ becomes a hyperbolic system in which the standard coordinate system $\{e_1, e_2, e_3, e_4\}$ is a scaled Jordan frame but not a Jordan frame (as $\lambda(e_1)=(\frac{3}{4},0,0)$). 
\end{example}

Motivated by the above example, we raise the following question: Is the property of having a Jordan frame hereditary? The proposition below answers this and shows much more: A derivative system cannot have a Jordan frame unless $n \leq 3$.

\begin{proposition}
	Suppose $(\V, p, e)$ has degree $n \geq 4$. Then its derivative system $(\V, p', e)$ cannot have a Jordan frame. 
\end{proposition}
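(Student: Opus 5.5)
Suppose, to reach a contradiction, that $\{c_1, c_2, \ldots, c_k\}$ is a Jordan frame in the derivative system $(\V, p', e)$, which has degree $n-1$. The plan is to transfer the frame back to $(\V, p, e)$, where it becomes a scaled Jordan frame with too few elements.

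\emph{Step 1: $k = n-1$.} A Jordan frame in $(\V,p',e)$ is in particular a scaled Jordan frame in that system, with $d = c_1 + \cdots + c_k = e \in \Lambda'_{++}$. Each $c_i$ is a primitive idempotent in $(\V,p',d)$ because $d=e$. Applying Theorem \ref{thm on scaled JF} to the degree-$(n-1)$ system $(\V,p',e)$ gives $k = n-1$. Alternatively, take traces in the semi-inner product induced by $p'$: $n-1 = \tr'(e) = \sum_i \tr'(c_i) = k$.

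\emph{Step 2: each $c_i$ is rank one in $(\V,p,e)$.} Each $c_i \in \lplus^\prime$ has rank $1$ in $(\V,p',e)$. Since $n \geq 4$, we have $1 \leq n-3$, so Proposition \ref{rank preservation}$(b)$ applies with rank $1$. It gives $c_i \in \lplus$ and $\rank_p(c_i) = 1$. This is the only place the hypothesis $n\ge 4$ enters, and it is the crux of the argument. For $n \le 3$ the interlacing argument breaks down: it cannot force the last eigenvalues of $c_i$ relative to $p$ to vanish, which is consistent with Example \ref{example4.4}.

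\emph{Step 3: contradiction.} We have $c_1 + \cdots + c_{n-1} = e \in \Lambda_{++}$, and each $c_i$ is a rank-one element of $\lplus$. So $\{c_1,\ldots,c_{n-1}\}$ is a scaled Jordan frame in $(\V,p,e)$. Theorem \ref{thm on scaled JF} then forces $n-1 \geq n$, which is absurd. Hence $(\V,p',e)$ has no Jordan frame.

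The main point to check carefully is the index bookkeeping in Step 2. Proposition \ref{rank preservation}$(b)$ is stated for $p$ of degree $n$ and requires $k \le n-3$ for the rank in the $p'$-system; here $k=1$, so the condition is exactly $n\ge 4$. Everything else is a direct citation of Theorem \ref{thm on scaled JF}.
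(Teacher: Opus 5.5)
Your proof is correct and is essentially the paper's argument. Proposition~\ref{rank preservation}$(b)$, applied with rank $1 \le n-3$, turns the frame into a scaled Jordan frame of $n-1$ elements in $(\V,p,e)$, contradicting $k \ge n$ from Theorem~\ref{thm on scaled JF}; your Step 1 just makes explicit the count $k=n-1$ that the paper takes for granted.

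One small slip in your aside: Example~\ref{example4.4} is the derivative of a degree-$4$ system, so it illustrates the proposition itself rather than the breakdown for $n \le 3$. The $n=3$ behaviour is shown by the Jordan frame $\{(\tfrac{3}{2},0,0)^T,(-\tfrac{1}{2},1,1)^T\}$ of $(\R^3,E_2,\one)$, given in the remark following the proposition.
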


\begin{proof}
	Assume, if possible, $\{c_1, c_2, \ldots, c_{n-1}\}$ is a Jordan frame in $(\V, p^\prime, e)$. Then each $c_i$ is in $\lplus^\prime$ and has rank one in $(\V, p^\prime, e)$.  As $n \geq 4$, Proposition \ref{rank preservation}$(b)$ implies that $c_i\in \lplus$ and has rank one in $(\V, p, e)$. Since $c_1 + c_2 + \cdots + c_{n-1} = e \in \Lambda_{++}$, $\{c_1, c_2, \ldots, c_{n-1}\}$ forms a scaled Jordan frame in $(\V, p, e)$. Then from Theorem \ref{thm on scaled JF}, $n-1 \ge n$, which is not possible. Thus, $(\V, p', e)$ cannot have a Jordan frame.
\end{proof}

\begin{remark}
	On $\Rn$, $n\geq 4$, consider the elementary polynomial $E_k$ (see Example \ref{elementary polynomial}). The above proposition shows that  $(\R^n, E_k, \one)$ cannot have a Jordan frame when $3 \le k \le n-1$. However, this is not true when $n=3$: consider $(\R^3, E_2, \one)$, the derivative system of $(\R^3, E_3, \one)$. It is easy to verify that $\big \{(\tfrac{3}{2}, 0, 0)^T, (-\tfrac{1}{2}, 1, 1)^T\big \}$ is a Jordan frame of $(\R^3, E_2, \one)$.
\end{remark}

We end this section with a characterization of system-automorphisms.

\begin{corollary} \label{system-auto characterization}
	Suppose $(\V,p,e)$ is complete and admits a scaled Jordan frame. Then the following are equivalent for a linear transformation $A: \V \to \V$:
	\begin{itemize}
		\item[$(i)$] $A$ is a system-automorphism.
		\item[$(ii)$] $A$ is a cone-automorphism and $A(e)=e$.
	\end{itemize}
\end{corollary}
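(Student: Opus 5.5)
The implication $(i)\Rightarrow(ii)$ needs nothing new. If $A$ is a system-automorphism, Proposition \ref{prop properties of automorphism}$(ii)$ gives $A(\lplus)=\lplus$. Also $\lambda(Ae)=\lambda(e)=\one$, so $\lambda(Ae-e)=\lambda(Ae)-\one=0$ by the translation property \eqref{translation property of lambda}. Completeness of $p$ then forces $Ae=e$.

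For $(ii)\Rightarrow(i)$, I would first use minimality. Since the system has a scaled Jordan frame, Theorem \ref{condition for minimality} says $p$ is minimal. The idea is that a cone-automorphism $A$ transports $p$ to another hyperbolic polynomial with the same cone, and minimality pins it down. Concretely, define $q(x)\coloneq p(A^{-1}x)$. This is a homogeneous polynomial of degree $n$. Since $A^{-1}e=e$, we get $q(te-x)=p(te-A^{-1}x)$. Hence $q$ is hyperbolic in direction $e$, and $\lambda_q(x)=\lambda_p(A^{-1}x)$. Its hyperbolicity cone is $\{x:\lambda_p(A^{-1}x)\ge 0\}=A(\lplus)=\lplus$.

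So $q$ and $p$ induce the same cone in direction $e$, and both have degree $n$. By Helton--Vinnikov, the minimal polynomial is unique up to a positive constant and divides every polynomial inducing $\lplus$. Since $p$ is itself minimal, $p$ divides $q$, and equal degrees give $q=cp$ for a constant $c$. Evaluating at $e$ gives $q(e)=p(A^{-1}e)=p(e)$, so $c=1$. Then $p(te-A^{-1}x)=p(te-x)$ for all $t$, which means $\lambda(A^{-1}x)=\lambda(x)$. Replacing $x$ by $Ax$ yields $\lambda(Ax)=\lambda(x)$, and $A$ is invertible, so $A\in\Aut(\V)$.

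The main obstacle is the division step. I must make sure the Helton--Vinnikov statement applies in this generality: ``divides any polynomial that induces $\lplus$'' should be read with $q$ hyperbolic in the same direction $e$. That holds here precisely because $Ae=e$, and this is where hypothesis $(ii)$ is used. Completeness is needed only for the direction $(i)\Rightarrow(ii)$ and to guarantee invertibility is meaningful. If the divisibility result is only available for irreducible factors, I would instead argue through the factorization of $p$ into irreducible hyperbolic factors.
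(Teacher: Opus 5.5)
Your proof is correct and follows the paper's argument. For $(i)\Rightarrow(ii)$ you use $\Aut(\V)\subseteq\Aut(\lplus)$, the translation property and completeness; for $(ii)\Rightarrow(i)$ you use minimality of $p$ (from the scaled Jordan frame) to force $p$ to be invariant under $A$, exactly as the paper does. The only difference is that the paper cites Ito and Louren\c{c}o's Proposition 2.6 ($p(Ax)=\alpha\,p(x)$ for cone-automorphisms when $p$ is minimal), whereas you derive the needed identity directly from Helton--Vinnikov divisibility applied to $p\circ A^{-1}$, with $Ae=e$ keeping the direction fixed and giving constant $1$ upon evaluating at $e$.
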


\begin{proof}
	$(i) \Rightarrow (ii)$: Assume that $A$ is a system-automorphism. This means that $A$ is invertible and $\lambda(Ax)=\lambda(x)$ for all $x$. As noted in Proposition \ref{prop properties of automorphism}, $A$ is a cone-automorphism. As our system is complete, from $\lambda(Ae)=\lambda(e)$ and (\ref{translation property of lambda}) we get $Ae=e$. 
	
	$(ii) \Rightarrow (i)$: Assume that $(ii)$ holds. As $(\V,p,e)$ admits a scaled Jordan frame, we see from Theorem \ref{scaled JF property for the mth derivative} that $p$ is a minimal polynomial. Then \cite[Proposition 2.6]{ito2023automorphisms} applies: there exists a positive constant $\alpha$ such that 
	\[ p(Ax) = \alpha\, p(x) \quad (\forall\, x \in \V). \]
	In particular, as $Ae=e$, we have 
	\[ p(te-Ax) = p(tAe-Ax) = \alpha\, p(te-x) \quad (\forall\, t \in \R,\; x \in \V). \]
	This shows that $\lambda(Ax)=\lambda(x)$ for all $x\in \V$. As $A$ is invertible (recall $A(\lplus)=\lplus$), we see that $A$ is a system-automorphism.
\end{proof}

\begin{remark}
	In the proof of $(i) \Rightarrow (ii)$, we only used completeness of $p$, while in the proof of $(ii) \Rightarrow (i)$, we only used the minimality of $p$ to invoke a result of Ito and Louren{\c c}o. We note that M. Orlitzky (in a private communication) also uses these two conditions and similar arguments to show the equivalence of $(i)$ and $(ii)$.   
\end{remark}

\subsection{Properties of a Jordan frame}

We show below that every Jordan frame is orthonormal. First,  we cover a preliminary result.

\begin{theorem} \label{lambda of the sum}
	In a hyperbolic system of degree $n$, suppose $\{c_1, c_2, \ldots, c_k\}$ is a set of mutually orthogonal primitive idempotents. Then $k \leq n$ and $c_1 + c_2 + \cdots + c_k$ is an idempotent. Moreover, when $k = n$ and $p$ is complete, $c_1 + c_2 + \cdots + c_n = e$.
\end{theorem}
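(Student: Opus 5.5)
The plan is to rely on Theorem \ref{rank additive result} (rank additivity on orthogonal elements of $\lplus$), applied inductively. Set $s_j \coloneq c_1 + c_2 + \cdots + c_j$ for $1 \le j \le k$; each $s_j \in \lplus$ because $\lplus$ is a convex cone. We show by induction on $j$ that $\lambda(s_j) = \one_j$. The base case is $\lambda(s_1)=\lambda(c_1) = (1,0,\ldots,0)^T = \one_1$.

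For the inductive step, assume $\lambda(s_j) = \one_j$ with $j<k$. By bilinearity of the semi-inner product (Proposition \ref{bauschke2001hyperbolic}), $\ip{s_j}{c_{j+1}} = \sum_{i\le j}\ip{c_i}{c_{j+1}} = 0$. Both $s_j$ and $c_{j+1}$ lie in $\lplus$, and $\lambda(s_j)\neq 0$. So Theorem \ref{rank additive result}(ii) applies with $a = s_j$, where $r_1=\cdots=r_j=1$, and $b = c_{j+1}$. It gives
\[
\lambda(s_{j+1}) = \big[(1,\ldots,1,s_{j+1},\ldots,s_n)^T\big]^\downarrow ,
\]
where each $s_i$ is an eigenvalue of $c_{j+1}$, i.e.\ either $0$ or $1$, and the unique nonzero eigenvalue $1$ of $c_{j+1}$ appears among $s_{j+1},\ldots,s_n$. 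This needs $n-j \ge 1$: if $j = n$, the vector would have no free slots for the nonzero eigenvalue of $b$. Equivalently, Theorem \ref{rank additive result}(iii) gives $\rank(s_{j+1}) = j+1$, which forces $j+1 \le n$. Among $s_{j+1},\ldots,s_n$ exactly one entry equals $1$ and the others are $0$, since $c_{j+1}$ has only one nonzero eigenvalue counted with multiplicity. Hence $\lambda(s_{j+1}) = \one_{j+1}$. Running the induction up to $j=k$ shows that $k = \rank(s_k) \le n$ and that $s_k$ is an idempotent.

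For the last claim, take $k=n$. Then $\lambda(s_n) = \one$, so by \eqref{translation property of lambda} we get $\lambda(s_n - e) = \lambda(s_n) - \one = 0$. If $p$ is complete, this forces $s_n - e = 0$, that is, $c_1+\cdots+c_n = e$.

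The main thing to get right is the multiplicity bookkeeping in (ii). We must check that $\{s_{j+1},\ldots,s_n\}$ is a multiset carrying the eigenvalues of $b$ other than the first $j$ (which were forced to be zero), so the single $1$ of $c_{j+1}$ contributes exactly one new unit entry. This follows from the proof of Theorem \ref{rank additive result}: there $\operatorname{diag}(E)$ is a permutation of $\lambda(b)=(1,0,\ldots,0)^T$, and its first $j$ entries vanish.
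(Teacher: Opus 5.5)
Your proof is correct and follows the paper's route: an induction with Theorem \ref{rank additive result} gives $\lambda(c_1+\cdots+c_k)=\one_k$, from which $k\le n$ and idempotency follow, and for $k=n$ the translation property plus completeness gives $c_1+\cdots+c_n=e$. Your multiplicity bookkeeping (which part (iii) also settles, since the new entries lie in $\{0,1\}$ and the rank rises by exactly one) fills in the detail that the paper leaves to ``an induction argument.''
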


\begin{proof}
	Theorem \ref{rank additive result}, together with an induction argument, shows that 
	\[ \lambda(c_1+c_2+\cdots+c_k)=\one_k. \] 
	It follows that $c_1+c_2+\cdots+c_k$ is an idempotent and $k\leq n$. Now suppose $k=n$ and $p$ is complete; let $u = c_1 + c_2 + \cdots + c_n$. Then, $\lambda(u)  = \one$. By \eqref{translation property of lambda}, we have $\lambda(u-e) = \lambda(u) - \one = 0$. Since $p$ is complete, $u = e$.
\end{proof}

\begin{theorem} \label{frame eigenvalue}
	\textup{(Orthonormality in a Jordan frame)} 
	Consider a hyperbolic system $(\V, p, e)$ of degree $n$ with $\{ c_1, c_2, \ldots, c_k\} \subseteq \lplus$. If $\{c_1, c_2, \ldots, c_k\}$ is a Jordan frame, then it is orthonormal and $k = n$. The converse holds when the system is complete, and each $c_i$ is a primitive idempotent.
\end{theorem}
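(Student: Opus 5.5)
The forward direction follows the computation in the proof of Theorem \ref{thm on scaled JF}, now with $d=e$. Suppose $\{c_1,\ldots,c_k\}$ is a Jordan frame in $(\V,p,e)$. Each $c_i$ is a primitive idempotent, so $\lambda(c_i)=(1,0,\ldots,0)^T$. Since a primitive idempotent has rank one and the sum $e$ lies in $\Lambda_{++}$, the set is in particular a scaled Jordan frame with $d=e$. We can therefore work directly in $(\V,p,e)$, with $\alpha_i=1$ for every $i$.

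Two facts give $k=n$. First, the trace satisfies $\tr(c_i)=\ip{\lambda(c_i)}{\one}=1$. Second, by linearity of the trace,
\[ n=\tr(e)=\sum_{i=1}^k \tr(c_i)=k. \]
For orthonormality, first note that $\norm{c_i}=\norm{\lambda(c_i)}=1$ by \eqref{ip expanding}. Then repeat the computation \eqref{alphai equality}:
\[ 1=\tr(c_i)=\ip{e}{c_i}=\norm{c_i}^2+\sum_{j\neq i}\ip{c_i}{c_j}=1+\sum_{j\neq i}\ip{c_i}{c_j}. \]
Each term $\ip{c_i}{c_j}$ is nonnegative by Theorem \ref{SG theorem}, since all $c_j\in\lplus$. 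A sum of nonnegative terms equal to zero forces each term to vanish, so $\ip{c_i}{c_j}=0$ for all $j\neq i$. Hence the frame is orthonormal.

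For the converse, assume the system is complete, each $c_i$ is a primitive idempotent, the set $\{c_1,\ldots,c_k\}$ is orthonormal, and $k=n$. Orthonormality means the $c_i$ are mutually orthogonal primitive idempotents, so Theorem \ref{lambda of the sum} applies directly. With $k=n$ and $p$ complete, it gives $c_1+\cdots+c_n=e$, which is exactly the Jordan frame condition.

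No step here is a real obstacle, since the key ingredients (subduality in Theorem \ref{SG theorem} and rank additivity packaged in Theorem \ref{lambda of the sum}) are already available. The only point needing care is in the forward direction. Orthogonality must come from the trace identity together with nonnegativity of all the pairwise inner products, not from any eigenvalue decomposition. That is why the semi-inner product induced by $\lambda$ with direction $e$ itself must be used, rather than some other direction $d$.
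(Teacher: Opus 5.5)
Your proof is correct and follows essentially the same route as the paper: subduality (Theorem~\ref{SG theorem}) together with the identity $1=\ip{e}{c_i}=1+\sum_{j\neq i}\ip{c_i}{c_j}$ gives orthogonality, and the converse is a direct appeal to Theorem~\ref{lambda of the sum}. The only cosmetic difference is that you obtain $k=n$ from $n=\tr(e)=\sum_i\tr(c_i)=k$, whereas the paper computes $n=\norm{e}^2=\sum_i\norm{c_i}^2=k$ after orthogonality is established; the paper itself notes your trace route as an alternative via Theorem~\ref{thm on scaled JF}.
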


\begin{proof} 
	Suppose $\{c_1, c_2, \ldots, c_k\}$ is a Jordan frame in $(\V,p,e)$. Using \eqref{trace}, we see that  $\ip{e}{c_i} = 1$ for all $1 \leq i \leq k$. Also, since $\lambda$ is norm-preserving, $\ip{c_i}{c_i} = \norm{c_i}^2 = \norm{\lambda(c_i)}^2 = 1$. Then, $e = c_1 + c_2 + \cdots + c_k$ implies that
	\[ 1 = \ip{e}{c_i} = \norm{c_i}^2 + \sum_{j \neq i} \ip{c_j}{c_i} = 1 + \sum_{j \neq i} \ip{c_j}{c_i}. \]
	However, $c_i, c_j \in \lplus \subseteq (\lplus)^*$ implies that $\ip{c_i}{c_j} \geq 0$. Thus, we have $\ip{c_i}{c_j} = 0$ for $i \neq j$. As $\ip{c_i}{c_i} = 1$ for all $i$, we see that the set $\{c_1,c_2,\ldots, c_k\}$ is orthonormal; in particular, it is linearly independent. Moreover, 
	\[ n = \norm{e}^2 = \norm{c_1 + c_2 + \cdots + c_k}^2 = \sum_{i=1}^{k} \norm{c_i}^2  = k. \]
	(This equality can also be seen from Theorem \ref{thm on scaled JF}.)
	Thus, we have proved the first part of the theorem.
	
	For the second part, assume that $\{c_1, c_2, \ldots, c_n\}$ is a set of mutually orthogonal primitive idempotents and $p$ is complete. By Theorem \ref{lambda of the sum}, $c_1 + c_2 + \cdots + c_n = e$. So, $\{c_1, c_2, \ldots, c_n\}$ is a Jordan frame.
\end{proof}

The above theorem allows us to say that 
\begin{center}
	\begin{minipage}{0.85\textwidth}
		\emph{A Jordan frame in a hyperbolic system $(\V,p,e)$ of degree $n$ is an orthonormal set consisting of $n$ primitive idempotents with sum $e$.}
	\end{minipage}
\end{center}

We note that as a consequence of the above, a Jordan frame in $(\V, p, e)$ cannot be a Jordan frame in $(\V, p^\prime, e)$.

\gap
The following result specifies conditions for a set of rank-one elements to be a Jordan frame.

\begin{theorem}
	Let $(\V, p, e)$ be a hyperbolic system of degree $n$ and $\{c_1, c_2, \ldots, c_k\}$ be a set of rank-one elements in $\lplus$. Consider the following statements:
	\begin{itemize}
		\item[$(i)$] Each $c_i$ is a primitive idempotent.
		\item[$(ii)$] $c_1 + c_2 + \cdots + c_k = e$.
		\item[$(iii)$] $\ip{c_i}{c_j} = 0$ for all $i \neq j$.
	\end{itemize}
	Then $(i) + (ii) \Rightarrow (iii)$ and $(ii) + (iii) \Rightarrow (i)$. Also, $(i) + (iii) \Rightarrow (ii)$ when $k = n$ and $p$ is complete.
\end{theorem}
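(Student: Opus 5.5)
The plan is to treat the three implications separately; each follows from earlier results.

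For $(i)+(ii)\Rightarrow(iii)$: under $(i)$ and $(ii)$ the set $\{c_1,\ldots,c_k\}$ is a Jordan frame, so Theorem \ref{frame eigenvalue} gives orthonormality and hence $(iii)$. Directly: $\tr(c_i)=\ip{e}{c_i}=1$ and $\norm{c_i}^2=\norm{\lambda(c_i)}^2=1$. Expanding $1=\ip{e}{c_i}=\norm{c_i}^2+\sum_{j\neq i}\ip{c_j}{c_i}$ and using $\ip{c_i}{c_j}\ge 0$ from Theorem \ref{SG theorem} forces every cross term to vanish.

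For $(ii)+(iii)\Rightarrow(i)$: write $\lambda(c_i)=(\alpha_i,0,\ldots,0)^T$ with $\alpha_i>0$. Then $\tr(c_i)=\alpha_i$ and $\norm{c_i}^2=\alpha_i^2$. Using $(ii)$ and $(iii)$,
\[ \alpha_i=\tr(c_i)=\ip{e}{c_i}=\norm{c_i}^2+\sum_{j\neq i}\ip{c_j}{c_i}=\alpha_i^2, \]
so $\alpha_i=1$ (as $\alpha_i>0$), and each $c_i$ is a primitive idempotent. This is the same computation as \eqref{alphai equality}, now with the cross terms equal to zero by hypothesis.

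For $(i)+(iii)\Rightarrow(ii)$ when $k=n$ and $p$ is complete: this is the last assertion of Theorem \ref{lambda of the sum}. Repeated application of rank additivity (Theorem \ref{rank additive result}(ii)) gives $\lambda(c_1+\cdots+c_n)=\one$. By \eqref{translation property of lambda}, $\lambda(c_1+\cdots+c_n-e)=0$, and completeness then yields $c_1+\cdots+c_n=e$.

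The only real subtlety is in the first two implications: the expansion of $\ip{e}{c_i}$ needs the semi-inner product to be bilinear (Proposition \ref{bauschke2001hyperbolic}), and the identity $\tr(c_i)=\alpha_i$ uses \eqref{trace}. Neither requires completeness, which is consistent with the statement imposing completeness only for the third implication.
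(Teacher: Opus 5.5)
Your proposal is correct and follows the paper's proof essentially verbatim. The first implication comes from the orthonormality of Jordan frames (Theorem~\ref{frame eigenvalue}). The second is the computation \eqref{alphai equality} with $d=e$ and vanishing cross terms, giving $\alpha_i=\alpha_i^2$. The third comes from Theorem~\ref{lambda of the sum}; the paper cites Theorem~\ref{frame eigenvalue} for it, which itself rests on Theorem~\ref{lambda of the sum}.
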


\begin{proof}
	When $(i)$ and $(ii)$ hold, we have a Jordan frame. The implication $(i)+(ii)\Rightarrow (iii)$ follows from Theorem \ref{frame eigenvalue}.
	
	When $(ii)$ and $(iii)$ hold, we proceed as in the proof of Theorem 4.2 with $d=e$. Then (\ref{alphai equality}) shows that $\alpha_i = 1$ for all $i$, where $\alpha_i$ is the nonzero eigenvalue of $c_i$. Thus, $(i)$ holds.
	
	When $k=n$ and $p$ is complete, the implication $(i) + (iii) \Rightarrow (ii)$ comes from Theorem \ref{frame eigenvalue}. 
\end{proof}

\begin{theorem} \label{basic theorem on frame} 
	In the hyperbolic system $(\V,p,e)$ of degree $n$, let $\{c_1, c_2, \ldots, c_n\}$ be a Jordan frame. For real numbers $r_1, r_2, \ldots, r_n$, define
	\[ x = r_1c_1 + r_2c_2 + \cdots + r_nc_n \quad \text{and} \quad r = (r_1, r_2, \ldots, r_n)^T. \] 
	Then, $\lambda(x) = r^\downarrow$.
\end{theorem}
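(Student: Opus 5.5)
First use the translation property \eqref{translation property of lambda} to reduce to the case $r \geq 0$. Since $\sum_i c_i = e$, for any $t\in\R$ we have $x + te = \sum_i (r_i + t) c_i$ and $\lambda(x+te) = \lambda(x) + t\one$. Also $(r+t\one)^\downarrow = r^\downarrow + t\one$. So it suffices to prove the claim when all $r_i \ge 0$, and in fact, after adding a large $t$, when all $r_i > 0$.

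For $r\geq 0$, we will show by induction on $m$ that
\[ \lambda\Big(\sum_{i=1}^{m} r_i c_i\Big) = \big[(r_1,\ldots,r_m,0,\ldots,0)^T\big]^\downarrow . \]
By Theorem \ref{frame eigenvalue}, the frame is orthonormal, so $\ip{c_i}{c_j}=0$ for $i\neq j$.

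Consider the induction step with nonnegative coefficients. Put $a \coloneq \sum_{i\le m} r_i c_i \in \lplus$ and $b \coloneq r_{m+1}c_{m+1}\in\lplus$. By bilinearity $\ip{a}{b}=0$, so Theorem \ref{rank additive result} applies. If $\lambda(a)=0$, item $(i)$ gives $\lambda(a+b)=\lambda(b)=(r_{m+1},0,\ldots,0)^T$, which is the desired vector. Otherwise $\lambda(a)$ has nonzero entries exactly the positive $r_i$ ($i\le m$), say $k$ of them, by the inductive hypothesis. Item $(ii)$ then gives $\lambda(a+b) = [(\text{those } r_i, s_{k+1},\ldots,s_n)]^\downarrow$, where the $s_j$ are eigenvalues of $b$, namely $r_{m+1}$ or $0$, and every nonzero eigenvalue of $b$ appears among them. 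So if $r_{m+1}>0$, exactly one $s_j$ equals $r_{m+1}$ (it has multiplicity one in $\lambda(b)$) and the rest are $0$. This gives the claimed form. Alternatively, one could use the explicit matrix representation from Proposition \ref{gurvits result1} directly, but the rank-additivity theorem already packages it.

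Taking $m=n$ yields $\lambda(x)=r^\downarrow$ for $r\ge 0$, and the translation step finishes the general case.

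The main subtlety is the multiplicity bookkeeping in item $(ii)$. It only says each $s_i$ is \emph{an} eigenvalue of $b$, so I must check that $r_{m+1}$ cannot appear twice among the $s_j$. This follows from the proof of Theorem \ref{rank additive result}: the diagonal entries $s$ of $E$ are a permutation of $\lambda(b)$, which contains $r_{m+1}$ exactly once. Alternatively, a trace count $\tr(a+b)=\tr(a)+\tr(b)$ forces the multiset to be exactly right.
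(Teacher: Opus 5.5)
Your proposal is correct and follows essentially the same route as the paper: translate by a multiple of $e$ to make the coefficients positive, then induct using the orthogonality from Theorem~\ref{frame eigenvalue} and item $(ii)$ of Theorem~\ref{rank additive result}. Your explicit check that $r_{m+1}$ appears exactly once among the $s_j$, via the permutation structure in that theorem's proof or a trace count, makes precise a step the paper passes over quickly.
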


\begin{proof}
	As $e = c_1 + c_2 + \cdots + c_n$, we have, for any $t \in \R$, $x + te =(r_1+t)c_1 + (r_2+t)c_2 + \cdots + (r_n+t)c_n$. Since $\lambda(x+te) = \lambda(x) + t\one$, showing $\lambda(x)=r^\downarrow$ is the same as showing $\lambda(x+te)=(r+t\one)^\downarrow.$ Hence, by adding a suitable multiple of $e$ to $x$, we may assume that $r_i > 0$ for all $i = 1, 2, \ldots, n$. We also assume without loss of generality that $r_1 \geq r_2 \geq \cdots \geq r_n > 0$.
	
	With these in place, we show by induction that for all $1 \leq k \leq n$, 
	\begin{equation} \label{induction}
		\lambda(r_1c_1 + r_2c_2 + \cdots + r_kc_k) = (r_1, r_2, \ldots, r_k, 0, \ldots, 0)^T.
	\end{equation}
	Then, putting $k=n$, we get our stated result.
	
	The above statement holds when $k=1$ as $\lambda$ is positively homogeneous and $\lambda(c_1) = (1, 0, 0, \ldots, 0)^T$. For the induction step, assume (\ref{induction}) holds for some $k$, $1 \leq k < n$. We show 
	\begin{equation} \label{induction statement for k+1}
		\lambda(r_1c_1 + r_2c_2 + \cdots + r_kc_k + r_{k+1}c_{k+1}) = (r_1, r_2, \ldots, r_k, r_{k+1}, 0, \ldots, 0)^T. 
	\end{equation}
	Let $a = r_1c_1 + r_2c_2 + \cdots + r_kc_k$  and $b = r_{k+1}c_{k+1}$. Then  by the induction hypothesis,
	\[ \lambda(a) = (r_1, r_2, \ldots, r_k, 0, \ldots, 0)^T \quad \text{and} \quad \lambda(b)=(r_{k+1},0,\ldots, 0)^T. \]  
	Since $a, b \in \lplus$ with $\lambda(a) \neq 0 \neq \lambda(b)$ and $\ip{a}{b} = 0$, from Theorem \ref{rank additive result}
	\[ \lambda(a+b) = \big[ (r_1, r_2, \ldots, r_k, s_{k+1}, \ldots, s_n)^T \big]^\downarrow, \]
	where each $s_i$ is an eigenvalue of $b$ and all the nonzero eigenvalues of $b$ are contained in $\{s_{k+1}, s_{k+2}, \ldots, s_n\}$. As $r_{k+1}$ is the only nonzero eigenvalue of $b$ and  $r_1 \geq r_2 \geq \cdots \geq r_{k} \geq r_{k+1}$, we have
	\[ \lambda(a+b) = (r_1, r_2, \ldots, r_k, r_{k+1}, 0, \ldots, 0)^T. \]
	This gives \eqref{induction statement for k+1}. Our assertion then follows from \eqref{induction} by putting $k=n$.
\end{proof}

We now present our key result, which addresses the question of when a hyperbolic system of degree $n$ admits a Jordan frame.

\begin{theorem}
	A hyperbolic system $(\V, p, e)$ of degree $n$ admits a Jordan frame if and only if the system contains a subsystem $(\W,p,e)$ that is isomorphic to  $(\Rn, E_n,\one)$ $($equivalently, $(\V, p, e)$ contains a copy of the Euclidean Jordan algebra $\Rn)$.
\end{theorem}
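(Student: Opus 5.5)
The plan is to make precise what ``subsystem isomorphic to $(\Rn, E_n,\one)$'' means: a linear subspace $\W\subseteq\V$ containing $e$ and a linear isomorphism $\phi:\Rn\to\W$ with $\phi(\one)=e$ and $p(\phi(u)) = \kappa\, E_n(u) = \kappa\, u_1u_2\cdots u_n$ for all $u\in\Rn$ and some constant $\kappa>0$. Equivalently, since $p(e)\neq 0$ and $p(te-\phi(u))=\kappa\prod_i (t-u_i)$, this says $\lambda(\phi(u)) = u^\downarrow$ for all $u\in\Rn$, so that $(\W,p|_\W,e)$ is a hyperbolic system whose eigenvalue map is transported from that of $\Rn$. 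I would prove both directions with this description.

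($\Rightarrow$) Let $\{c_1,\ldots,c_k\}$ be a Jordan frame. By Theorem \ref{frame eigenvalue}, $k=n$ and the $c_i$ are orthonormal, hence linearly independent. I would set $\W \coloneq \operatorname{span}\{c_1,\ldots,c_n\}$ and $\phi(u) \coloneq \sum_i u_i c_i$. Then $\phi$ is a linear isomorphism onto $\W$ and $\phi(\one)=\sum_i c_i = e\in\W$. Theorem \ref{basic theorem on frame} gives $\lambda(\phi(u)) = u^\downarrow$ directly. Hence
\[
p(te-\phi(u)) = p(e)\prod_{i=1}^n (t-u_i),
\]
because $p(te-x)=p(e)\prod_i(t-\lambda_i(x))$. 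Setting $t=0$ and using homogeneity of degree $n$ yields $p(\phi(u)) = p(e)\,u_1\cdots u_n$. So $\phi$ is an isomorphism of hyperbolic systems, up to the positive constant $p(e)$; the sign can be normalized by replacing $p$ with $-p$ if needed, which does not change $\lambda$. Under $\phi$, the semi-inner product \eqref{ip} on $\W$ becomes the usual inner product, because the $c_i$ are orthonormal. Moreover, the Jordan product $c_i\circ c_j=\delta_{ij}c_i$ transported from $\Rn$ makes $\W$ a copy of the Euclidean Jordan algebra $\Rn$ whose eigenvalue map coincides with $\lambda|_\W$.

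($\Leftarrow$) Given such $\W$ and $\phi$, I would set $c_i \coloneq \phi(e_i)$, where $e_i$ are the standard coordinate vectors. Since $\lambda(\phi(u)) = u^\downarrow$ (the isomorphism preserves the roots of $t\mapsto p(te-x)$, as $\phi(\one)=e$), we get $\lambda(c_i)=(1,0,\ldots,0)^T$. Thus each $c_i\in\lplus$ is a primitive idempotent, and $\sum_i c_i=\phi(\one)=e$, so this is a Jordan frame.

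The main obstacle is interpretive rather than technical: one must fix the notion of subsystem and isomorphism so that eigenvalues are preserved, which requires that $e\in\W$ and $\phi(\one)=e$. With that convention, $\lambda_{(p|_\W,e)}=\lambda|_\W$ holds by definition, and everything reduces to Theorems \ref{frame eigenvalue} and \ref{basic theorem on frame}. For the parenthetical Euclidean Jordan algebra formulation, I would state that ``contains a copy'' means a linear injection of $\Rn$ sending $\one$ to $e$ and intertwining the eigenvalue maps, which is the same condition.
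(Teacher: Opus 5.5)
Your proof is correct and follows the paper's argument. In both directions you use the same frame-to-coordinates correspondence: you write it as $\phi:\Rn\to\W$, the paper as its inverse $\W\to\Rn$ with $\phi(x)^\downarrow=\lambda(x)$. You rely, as the paper does, on Theorem~\ref{frame eigenvalue} for $k=n$ and linear independence and on Theorem~\ref{basic theorem on frame} for $\lambda(\sum_i u_ic_i)=u^\downarrow$, and you take $\phi(e_i)$ (resp.\ $\phi^{-1}(e_i)$) as the frame in the converse.

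The only wrinkle is your requirement $\kappa>0$. The natural constant is $p(e)$, which may be negative, hence your sign patch. The paper's purely spectral definition of isomorphism avoids $\kappa$ altogether and even makes $\phi(e)=\one$ automatic, since $\phi(e)^\downarrow=\lambda(e)=\one$.
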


Here, we say that $(\W,p,e)$ is a \emph{subsystem} of $(\V,p,e)$ to mean that $\W$ is a (linear) subspace of $\V$ that contains $e$ (in which case, $p$ and $\lambda$ are restricted to $\W$ to make $(\W,p,e)$ into a hyperbolic system). Also, an \emph{isomorphism} between $(\W,p,e)$ and $(\Rn,E_n,\one)$ is a bijective linear transformation $\phi:\W\rightarrow \Rn$ such that $\phi(x)^\downarrow =\lambda(x)$ for all $x\in \W$.

\begin{proof}
	Suppose $(\V,p,e)$ has a Jordan frame $\mathcal{F} \coloneq \{c_1, c_2, \ldots, c_n\}$. Let $\W = \operatorname{span}(\mathcal{F})$ and $x,y\in \W$. By the orthonormality of $\mathcal{F}$, we have unique representations $x = \sum_{i=1}^{n} x_ic_i$ and $y = \sum_{i=1}^{n} y_ic_i$. By Theorem \ref{basic theorem on frame}, $\lambda(x) = \big[ (x_1, x_2, \dots, x_n)^T \big]^\downarrow$; moreover, from \eqref{ip}, $$\ip{x}{y} = \sum_{i=1}^{n} x_iy_i.$$ Now the transformation $\phi : \W \to \Rn$ defined by $\phi(x) = (x_1,x_2,\ldots,x_n)^T$ is an isomorphism between the hyperbolic systems $(\W,p,e)$ and $(\Rn,E_n,\one)$. 
	To see that  $(\W,p,e)$ can be turned into a Euclidean Jordan algebra, we define the product
	\[ x \circ y \coloneq \sum_{i=1}^{n} x_iy_ic_i. \]
	With these, we easily verify that $(\W, \ip{\cdot}{\cdot}, \circ)$ is a Euclidean Jordan algebra with unit $e$. In this algebra, $\mathcal{F}$ is a Jordan frame and 
	$\det(x) = x_1x_2 \cdots x_n$, which is the product of eigenvalues of $x$. So this Euclidean Jordan algebra is an isomorphic copy of the Euclidean Jordan algebra $\Rn$.
	
	Conversely, suppose there exist a linear subspace $\W$ of $\V$ containing $e$ and a linear isomorphism $\phi : \W \to \Rn$ such that $\phi(x)^{\downarrow} = \lambda(x)$ for all $x \in \W$. Then $\V$ has a Jordan frame, namely, $\{\phi^{-1}(e_1),\phi^{-1}(e_2),\ldots, \phi^{-1}(e_n)\}$, where $\{e_1,e_2,\ldots, e_n\}$ is the usual coordinate system in $\Rn$. We note that this Jordan frame induces a Euclidean Jordan algebra structure on $\W$ that is isomorphic to the algebra $\Rn$. This completes the proof of the theorem.
\end{proof}

\section{Doubly stochastic transformations and majorization}

We recall from Definition \ref{definition: ds} that a linear transformation on a hyperbolic system $(\V, p, e)$ is doubly stochastic if it is positive, unital, and trace-preserving. When the system is complete, every system-automorphism $T$ is doubly stochastic. Another simple example is the transformation $T$ defined by $T(x)=\frac{\ip{x}{e}}{n}\,e$. Note that in both cases, $\lambda(T(x))\prec \lambda(x)$ for all $x \in \V$. 
The goal of this section is to describe a majorization result for a certain type of doubly stochastic transformation. 

\gap
In the following, unless mentioned otherwise, we assume that $(\V,p,e)$ is a hyperbolic system of degree $n$. We start with the following concept introduced in \cite[Definition 2.4]{gurvits2004combinatorics}.

\begin{definition}
	In a hyperbolic system $(\V, p, e)$ of degree $n$, consider an $n$-tuple of objects written in the (matrix) form: $\bfA \coloneq \big[ a_1, a_2, \ldots, a_n \big]$. We say that $\bfA$ is \emph{$e$-doubly stochastic} if $a_i \in \lplus$ and $\tr(a_i) = 1$ for all $1 \leq i \leq n$, with $a_1 + a_2 + \cdots + a_n = e$.
\end{definition}

Here are some examples.

\begin{example} \label{DS example 1}
	If $\{c_1, c_2, \ldots, c_n\}$ is a Jordan frame, then $\bfA = [c_1, c_2, \ldots, c_n]$ is $e$-doubly stochastic. The converse need not be true: Take $\bfA = \big[ a_1, a_2, \ldots, a_n \big]$, where $a_i = \frac{1}{n}e$ for all $i$. 
\end{example}

\begin{example} \label{DS example 2}
	Let $T$ be a doubly stochastic transformation and $\bfA=\big[ a_1, a_2, \ldots, a_n \big]$ be $e$-doubly stochastic. Then  $\bfB = \big[ T(a_1), T(a_2),\ldots, T(a_n)\big]$ is $e$-doubly stochastic. In particular, this holds when $p$ is complete, and $T$ is a system-automorphism of $\V$.
\end{example}

\begin{example} \label{DS example 3}
	Suppose $\bfA = \big[ a_1, a_2, \ldots, a_n \big]$ is $e$-doubly stochastic.  Let $D = [d_{ij}]$ be an $n\times n$ doubly stochastic matrix. Define $b_i \coloneq \sum_{j=1}^{n} d_{ij}a_j$ for $1 \leq i \leq n$. Then $\mathbf{B} = \big[ b_1, b_2, \ldots, b_n \big]$ is $e$-doubly stochastic.
\end{example}

We now describe some properties of $e$-doubly stochastic $n$-tuples.

\begin{proposition} 
	Suppose $\bfA = \big[ a_1, a_2, \ldots, a_n \big]$ is $e$-doubly stochastic. Then the following hold:
	\begin{itemize}
		\item[$(i)$] The Gram-matrix $G \coloneq \big[ \ip{a_i}{a_j} \big]$ is a doubly stochastic matrix.
		\item[$(ii)$] The matrix 
		$ M \coloneq \big[ \lambda(a_1), \lambda(a_2), \ldots, \lambda(a_n) \big]$
		is column-stochastic. Moreover, $M$ is row-stochastic if and only if $M = \frac{1}{n} \one\one^T$;\\
		when the system is complete, $M$ is doubly stochastic if and only if $a_i = \frac{1}{n}e$ for all $i$.
		\item[$(iii)$] There exists a doubly stochastic matrix $D$ such that the matrix $DM$ is doubly stochastic.
		\item[$(iv)$] When $a_i$'s  are mutually orthogonal, $\{a_1, a_2, \ldots, a_n\}$ is a Jordan frame. 
	\end{itemize}
\end{proposition}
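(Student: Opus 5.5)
The plan is to handle the four items separately; each follows quickly from the subdual property (Theorem \ref{SG theorem}), the identity $\tr(x)=\ip{x}{e}=\ip{\lambda(x)}{\one}$ in \eqref{trace}, norm preservation $\norm{x}=\norm{\lambda(x)}$, and Proposition \ref{gurvits result2}.

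For $(i)$, I would first note that the entries of $G$ are nonnegative, since $a_i,a_j\in\lplus$ and Theorem \ref{SG theorem} gives $\ip{a_i}{a_j}\geq 0$. For the $i$th row sum, bilinearity and $\sum_j a_j=e$ give $\sum_j\ip{a_i}{a_j}=\ip{a_i}{e}=\tr(a_i)=1$. Since $G$ is symmetric, the column sums are also $1$.

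For $(ii)$, each column $\lambda(a_i)$ is nonnegative because $a_i\in\lplus$, and its entries sum to $\tr(a_i)=1$; so $M$ is column-stochastic. Now suppose $M$ is row-stochastic, that is, $\sum_i\lambda(a_i)=\one$. Comparing the first and last coordinates gives $\sum_i\big(\lambda_1(a_i)-\lambda_n(a_i)\big)=1-1=0$. Each summand is nonnegative, so each $\lambda(a_i)$ is a constant vector. Its entries sum to $1$, hence $\lambda(a_i)=\frac1n\one$ and $M=\frac1n\one\one^T$. The converse is immediate. In the complete case, if $M$ is doubly stochastic then $\lambda(a_i-\frac1n e)=\lambda(a_i)-\frac1n\one=0$ by \eqref{translation property of lambda}, and completeness forces $a_i=\frac1n e$. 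Conversely, $a_i=\frac1n e$ gives $M=\frac1n\one\one^T$.

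For $(iii)$, I would take $D=\frac1n\one\one^T$. Then $DM=\frac1n\one(\one^TM)=\frac1n\one\one^T$, because $\one^TM=\one^T$ by column-stochasticity. This matrix is doubly stochastic.

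For $(iv)$, mutual orthogonality gives $1=\tr(a_i)=\ip{a_i}{e}=\sum_j\ip{a_i}{a_j}=\norm{a_i}^2=\norm{\lambda(a_i)}^2$. So $\lambda(a_i)\geq 0$ has entries summing to $1$ and squares summing to $1$. Since $\sum_j x_j^2\leq(\sum_j x_j)^2$ for nonnegative $x_j$, with equality exactly when at most one entry is nonzero, we get $\lambda(a_i)=(1,0,\ldots,0)^T$. Thus each $a_i$ is a primitive idempotent, and because $\sum a_i=e$, the set $\{a_1,\ldots,a_n\}$ is a Jordan frame. The only step requiring any thought is the row-stochastic characterization in $(ii)$, where the key is to compare the largest and smallest coordinates using the decreasing order of each $\lambda(a_i)$.
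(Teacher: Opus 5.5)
Your proof is correct. For $(i)$ and $(ii)$ it follows the paper's line. In the row-stochastic case the paper also plays the decreasing columns against the row sums, and your comparison of the first and last rows is the cleaner version: the paper's stated bound $m_{nj}\geq\frac1n$ runs the wrong way for a decreasing column summing to one. It should read $m_{nj}\leq\frac1n$, and then the $n$th row sum forces equality.

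The routes genuinely differ in $(iii)$ and $(iv)$.

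For $(iii)$, the paper uses Proposition~\ref{gurvits result2}$(b)$ to get $\one=\lambda(e)\prec\sum_i\lambda(a_i)$. It then applies the Hardy--Littlewood--P\'olya theorem to obtain a doubly stochastic $D$ with $D\sum_i\lambda(a_i)=\one$. Your choice $D=\frac1n\one\one^T$ works for every column-stochastic $M$. It even satisfies the paper's identity $D\sum_i\lambda(a_i)=\one$, because $\one^T\sum_i\lambda(a_i)=\sum_i\tr(a_i)=n$. So the majorization machinery is not needed for the item as stated.

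For $(iv)$, the paper applies rank additivity on mutually orthogonal elements of $\lplus$ (Theorem~\ref{rank additive result}). This gives $n\leq\sum_i\rank(a_i)=\rank(e)=n$, hence $\rank(a_i)=1$, and $\tr(a_i)=1$ then makes $a_i$ primitive. You instead compute $1=\tr(a_i)=\norm{a_i}^2=\norm{\lambda(a_i)}^2$. You then note that a nonnegative vector whose entries and squared entries both sum to $1$ must be $(1,0,\ldots,0)^T$. This is the orthogonal case of \eqref{alphai equality}. It uses only bilinearity and norm preservation of the semi-inner product. The paper's route, by contrast, passes through Gurvits's matrix representation (Proposition~\ref{gurvits result1}) via rank additivity.

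What each buys: the paper's argument uses the cardinality $n$ of the tuple to force rank one. Yours never uses the cardinality and is more self-contained.
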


\begin{proof}
	Item $(i)$ is easy to verify.
	
	$(ii)$ The column stochasticity and the `if' part in row-stochasticity of $M$ are easy to check. To see the `only if' part, suppose $M$ is row-stochastic; write $M = \big[ m_{ij} \big]$. Since each column of $M$ has decreasing entries with sum one, for all $i,j$, we have $m_{ij} \geq m_{nj} \geq \frac{1}{n}$. Then, as each row sum in $M$ is one, we see the equality $m_{ij} = \frac{1}{n}$. This shows that $M=\frac{1}{n}\one\one^T$. When $p$ is complete, for any $i$, $\lambda(a_i)=\frac{1}{n}\one$ implies that $a_i=\frac{1}{n}\,e$.
	
	$(iii)$ Since $\bfA$ is $e$-doubly stochastic, by Proposition \ref{gurvits result2},
	\[ \one = \lambda(e) = \lambda(a_1 + a_2 + \cdots + a_n) \prec \lambda(a_1) + \lambda(a_2) + \cdots + \lambda(a_n). \]
	Then by a theorem of Hardy-Littlewood-P{\'o}lya \cite[Theorem 2.B.2]{marshall2011inequalities}, there exists a doubly stochastic matrix $D=[d_{ij}]$ such that 
	\begin{equation} \label{HLP applied}
		\one = D (\lambda(a_1) + \lambda(a_2) + \cdots + \lambda(a_n)).
	\end{equation}
	Clearly, $DM$ is column stochastic. Since each $\lambda(a_i)$ has trace one and $D$ is doubly stochastic, (\ref{HLP applied}) shows that $DM$ is row stochastic. Thus, $DM$ is doubly-stochastic.
	
	$(iv)$ Suppose $a_i$'s are mutually orthogonal. Applying Theorem \ref{rank additive result}, we get
	\[ n =\sum_{i=1}^{n} 1\leq \sum_{i=1}^{n} \rank(a_i) = \rank \bigg( \sum_{i=1}^{n} a_i \bigg) = \rank(e) = n. \] 
	This can happen if and only if $\rank(a_i) = 1$ for all $i$. This, together with $\tr(a_i)=1$ implies that $a_i$'s are primitive idempotents with sum $e$.
\end{proof}

\begin{proposition} \label{e-ds implies lambda-ds} 
	If $\bfA= \big[ a_1, a_2, \ldots, a_n \big]$ is $e$-doubly stochastic, then for every $1 \leq k \leq n$ and every permutation $\sigma$ of $\{1, 2, \ldots, n\}$,
	\begin{equation} \label{eq:lambda-DS}
		\lambda \big( a_{\sigma(1)} + a_{\sigma(2)} + \dots + a_{\sigma(k)} \big) \prec \one_k.
	\end{equation}
	The converse holds when $p$ is complete.
\end{proposition}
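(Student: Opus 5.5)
The forward direction comes from sandwiching the partial sums between $0$ and $e$ in the cone order; the converse comes from reading off the cases $k=1$ and $k=n$ of \eqref{eq:lambda-DS}. No use of Proposition \ref{gurvits result1} beyond what is packaged in Proposition \ref{gurvits result2} should be needed.

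\emph{Forward direction.} Fix $k$ and $\sigma$, and let $s \coloneq a_{\sigma(1)} + \cdots + a_{\sigma(k)}$. Since $\lplus$ is a convex cone containing every $a_i$, we have $s \in \lplus$. Likewise $e - s$, being the sum of the remaining $n-k$ elements $a_{\sigma(j)}$, lies in $\lplus$, so $0 \le s \le e$. By Proposition \ref{gurvits result2}$(c)$ this gives $0 \le \lambda(s) \le \lambda(e) = \one$ entrywise. By linearity of the trace \eqref{trace}, the entries of $\lambda(s)$ sum to $\tr(s) = \sum_{i\le k}\tr(a_{\sigma(i)}) = k$.

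It then remains to prove an elementary fact in $\Rn$: if $u \in [0,1]^n$ and $\sum_i u_i = k$, then $u \prec \one_k$. For $j \le k$, the sum of the $j$ largest entries of $u$ is at most $j$, because each entry is at most $1$. For $j \ge k$, that sum is at most the total sum $k$, because all entries are nonnegative. The right-hand side for $\one_k$ is exactly $\min(j,k)$, and the total sums agree. This establishes \eqref{eq:lambda-DS}.

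\emph{Converse, with $p$ complete.} Assume \eqref{eq:lambda-DS} holds for all $k$ and $\sigma$.

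Take $k=1$. Then $\lambda(a_i) \prec (1,0,\ldots,0)^T$, so the entries of $\lambda(a_i)$ sum to $1$, i.e.\ $\tr(a_i)=1$. Moreover, the sum of the $n-1$ largest entries of $\lambda(a_i)$ is at most $1$. Subtracting this from the total shows $\lambda_n(a_i) \ge 0$, hence $a_i \in \lplus$.

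Take $k=n$. Then $u \coloneq \lambda(a_1+\cdots+a_n) \prec \one$. This gives $u_1 \le 1$ together with $\sum_i u_i = n$, which forces $u = \one$. By \eqref{translation property of lambda}, $\lambda\big(\sum_i a_i - e\big) = u - \one = 0$. Completeness of $p$ then yields $\sum_i a_i = e$, so $\bfA$ is $e$-doubly stochastic.

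The only step with any content is the elementary majorization fact used in the forward direction; the rest is bookkeeping with Proposition \ref{gurvits result2}$(c)$, the trace, and completeness.
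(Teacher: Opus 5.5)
Your proof is correct and takes the paper's route. The forward direction uses the sandwich $0 \le s \le e$ with Proposition \ref{gurvits result2}$(c)$ and $\tr(s)=k$. The converse reads off $k=1$ (giving $\tr(a_i)=1$ and $a_i\in\lplus$) and $k=n$ (giving $\lambda(\sum_i a_i)=\one$, then $\sum_i a_i=e$ via the translation property and completeness). The differences are cosmetic. You get $\lambda_n(a_i)\ge 0$ directly from the $(n-1)$-st partial-sum inequality rather than from the convex-hull characterization of majorization. You also treat a general $\sigma$ directly instead of reducing to the identity permutation.
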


\begin{proof}
	Suppose $\bfA$ is $e$-doubly stochastic. Then, for every permutation $\sigma$, $\bfB \coloneq \big[ a_{\sigma(1)}, a_{\sigma(2)}, \dots, a_{\sigma(k)} \big]$ is $e$-doubly stochastic. Hence, it is enough to prove (\ref{eq:lambda-DS}) for the identity permutation. For fixed $1 \leq k \leq n$, define $x = a_{1} + a_{2} + \cdots + a_{k}$. Since $x, e \in \lplus$ and $x \leq e$ (because $a_i \in \lplus$ and $\sum_{i=1}^{n} a_i = e$), we have $\lambda(x) \leq \lambda(e)=\one$ from Proposition \ref{gurvits result2}$(c)$. This implies that $\sum_{j=1}^{m} \lambda_j(x) \leq m$ for all $1 \leq m \leq n$ (and hence for $1 \leq m \leq k-1$.) Moreover, for $k \leq m \leq n$, the linearity of the trace implies 
	\[ \sum_{j=1}^{m} \lambda_j(x) \leq \sum_{j=1}^{n} \lambda_j(x) = \tr(x) = \sum_{j=1}^{k} \tr(a_{j}) = k. \]
	Further, we have equality throughout when $m = n$. This implies that $\lambda(x) \prec \one_k$.
	
	We now prove the reverse implication when $p$ is complete. Assume that $\bfA$ satisfies the condition \eqref{eq:lambda-DS} for all $1 \leq k \leq n$ and a permutation $\sigma$. Then, for every $1\leq i\leq n$, we can choose $\sigma$ so that $i=\sigma(1)$. Consequently, $\lambda(a_i) \prec (1, 0, 0, \ldots, 0)^T$ and so $\tr(a_i)=1$. Moreover, $\lambda(a_i)$ is in the convex hull of the standard basis vectors in $\Rn$, see e.g., \cite[Theorem II.1.10]{bhatia1997matrix}. Thus,  $\lambda(a_i)$ is a nonnegative vector, that is, $a_i \in \lplus$. Since $\lambda(a_1 + a_2 + \cdots + a_n) \prec (1, 1, \ldots, 1)^T = \one$, we have 
	$\lambda(a_1 + a_2 + \cdots + a_n) = \one = \lambda(e)$. By completeness, $a_1 + a_2 + \cdots + a_n = e$. Thus, $\bfA$ is $e$-doubly stochastic.
\end{proof}

\begin{theorem} \label{T_A is DS}
	In a hyperbolic system of degree $n$, suppose $\{c_1, c_2, \ldots, c_n\}$ is a Jordan frame, $\bfA = \big[ a_1, a_2, \ldots, a_n \big]$ is $e$-doubly stochastic, and $D = [d_{ij}]$ is a doubly stochastic matrix. Consider the linear transformation $T : \V \to \V$ given by
	\begin{equation} \label{DS transformation}
		T(x) \coloneq \sum_{i=1}^{n} \sum_{j=1}^{n} d_{ij} \ip{x}{a_j} c_i \quad (\forall\, x \in \V).
	\end{equation}
	Then $T$ is doubly stochastic on $(\V, p, e)$ and $\lambda(T(x)) \prec \lambda(x)$ for all $x \in \V$.
\end{theorem}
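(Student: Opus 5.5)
Since $T(x)$ is a linear combination of the Jordan frame elements $c_i$, Theorem \ref{basic theorem on frame} gives its eigenvalues directly. Write
\[ T(x) = \sum_{i=1}^{n} y_i c_i, \qquad y_i \coloneq \sum_{j=1}^{n} d_{ij}\ip{x}{a_j}. \]
Then $\lambda(T(x)) = y^\downarrow$, where $y = D z$ and $z \coloneq (\ip{x}{a_1}, \ldots, \ip{x}{a_n})^T$. The plan has three parts: check the three defining properties of double stochasticity using this formula, reduce the majorization to a statement about $z$, and prove that statement.

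\textbf{Doubly stochastic.} \emph{Positivity:} if $x \in \lplus$, then $\ip{x}{a_j} \geq 0$ by Theorem \ref{SG theorem}, so $y \geq 0$ and $\lambda(T(x)) = y^\downarrow \geq 0$, i.e., $T(x) \in \lplus$. \emph{Unital:} $\ip{e}{a_j} = \tr(a_j) = 1$, so $y = D\one = \one$ and $T(e) = \sum_i c_i = e$. \emph{Trace-preserving:} $\tr(T(x)) = \sum_i y_i$, since $\tr(c_i)=1$. Because $D$ has column sums one, $\sum_i y_i = \sum_j \ip{x}{a_j} = \ip{x}{\sum_j a_j} = \ip{x}{e} = \tr(x)$.

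\textbf{Reduction to $z$.} Since $D$ is doubly stochastic, $y = Dz \prec z$ (Hardy--Littlewood--P\'olya). By transitivity it therefore suffices to show $z \prec \lambda(x)$. The trace equality holds: $\sum_j z_j = \tr(x) = \sum_j \lambda_j(x)$.

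\textbf{The main obstacle: $z \prec \lambda(x)$.} The natural route is Proposition \ref{e-ds implies lambda-ds}. Relabel so that $z_1 \geq \cdots \geq z_n$, and fix $k$. Set $s \coloneq a_1 + \cdots + a_k$, so that $\lambda(s) \prec \one_k$. By \eqref{semi-ftvn condition},
\[ \sum_{j=1}^{k} z_j = \ip{x}{s} \leq \ip{\lambda(x)}{\lambda(s)}. \]
Both vectors are sorted decreasingly, and $\lambda(s) \prec \one_k$. Abel summation then gives
\[ \ip{\lambda(x)}{\lambda(s)} \leq \ip{\lambda(x)}{\one_k} = \sum_{i=1}^{k} \lambda_i(x). \]
Indeed, writing $\lambda(x)$ as a telescoping nonnegative combination of the vectors $\one_m$, plus a multiple of $\one$, reduces this to the partial-sum inequalities of $\lambda(s) \prec \one_k$, with the multiple of $\one$ handled by trace equality. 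Alternatively, since $\lambda(s)$ lies in the convex hull of permutations of $\one_k$, the bound follows from the rearrangement inequality.

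These partial-sum bounds, together with trace equality, give $z \prec \lambda(x)$, and hence $\lambda(T(x)) = y^\downarrow \prec z \prec \lambda(x)$. The delicate point is justifying the inequality $\ip{\lambda(x)}{u} \leq \ip{\lambda(x)}{\one_k}$ for $u \prec \one_k$. The convex-hull characterization \cite[Theorem II.1.10]{bhatia1997matrix} together with the rearrangement inequality handles it cleanly.
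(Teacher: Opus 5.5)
Your proof is correct and is essentially the paper's. The core step is the same: you bound $\sum_{j\le k} z_j=\ip{x}{a_1+\cdots+a_k}\le\ip{\lambda(x)}{\lambda(a_1+\cdots+a_k)}\le\ip{\lambda(x)}{\one_k}$ using \eqref{semi-ftvn condition}, Proposition~\ref{e-ds implies lambda-ds}, and the rearrangement fact. The only difference is how $D$ is handled: the paper absorbs it up front via the $e$-doubly stochastic tuple $b_i=\sum_j d_{ij}a_j$ (Example~\ref{DS example 3}), whereas you work with $\bfA$ directly and remove $D$ at the end using $Dz\prec z$ and transitivity of majorization.
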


\begin{proof} 
	Define $\mathbf{B} = \big[ b_1, b_2, \ldots, b_n \big]$, where $b_i = \sum_{j=1}^{n} d_{ij} a_j$ for all $i$. Then $\mathbf{B}$ is $e$-doubly stochastic (see Example \ref{DS example 3}) and (\ref{DS transformation}) simplifies to
	\[ T(x) = \sum_{i=1}^{n} \ip{x}{b_i} c_i \quad (\forall\, x \in \V). \]
	From Theorem \ref{SG theorem}, we see that $T(\lplus)\subseteq \lplus$. The other two conditions in Definition \ref{definition: ds} are easy to verify.  We now prove the majorization inequality. Fix $x \in \V$, let $r_i \coloneq \ip{x}{b_i}$ and $r \coloneq (r_1, r_2, \ldots, r_n)^T$.
	Then, $T(x) = \sum_{i=1}^{n} r_ic_i$ and by Theorem \ref{basic theorem on frame}, $\lambda(T(x)) = r^\downarrow$. We now claim that $r \prec \lambda(x)$.
	Assume, without loss of generality, that $r_1 \geq r_2 \geq \cdots \geq r_n$ (which can be achieved by rearranging $b_i$s in $\mathbf{B}$). Then, for any $k$, $1 \leq k \leq n$, 
	\begin{align*}
		r_1 + r_2 + \cdots + r_k & = \ip{x}{b_1 + b_2 + \cdots + b_k} \\
		& \leq \ip{\lambda(x)}{\lambda(b_1 + b_2 + \cdots + b_k)} \\
		& \leq \ip{\lambda(x)}{\one_k},
	\end{align*}
	where the first inequality is due to the inner-product-expanding property of $\lambda$; the second inequality holds due to Proposition \ref{e-ds implies lambda-ds} and the fact that if $u \prec v$ in $\Rn$ and $w \in \Rn$, then $\ip{w^{\downarrow}}{u^{\downarrow}} \leq \ip{w^{\downarrow}}{v^{\downarrow}}$; see \cite[Problem II.5.16]{bhatia1997matrix}.
	Moreover, for $k=n$,
	\[ \sum_{i=1}^{n} r_i = \bigg\langle x, \sum_{i=1}^{n} b_i \bigg\rangle = \ip{x}{e} = \tr(x) = \sum_{i=1}^{n} \lambda_i(x), \]
	where the second equality is due to $\mathbf{B}$ being $e$-doubly stochastic. Thus, $r \prec \lambda(x)$. Hence, $\lambda(T(x)) = r^\downarrow \prec \lambda(x)$.
\end{proof}

\begin{remark}
	Suppose $\bfA = \big[ a_1, a_2, \ldots, a_n \big]$ and $\bfB = \big[ b_1, b_2, \ldots, b_n \big]$ are $e$-doubly stochastic $n$-tuples. Let $T: \V \to \V$ be a doubly stochastic linear transformation. Then, it is straightforward to show that 
	the matrix $D \coloneq \big[\! \ip{a_i}{T(b_j)} \!\big]$ is doubly stochastic. In particular, if $\{c_1, c_2, \ldots, c_n\}$ is a Jordan frame, then the matrix $\big[\! \ip{c_i}{T(c_j)} \!\big]$ is doubly stochastic.
\end{remark}

\begin{example}
	Let $\{c_1, c_2, \ldots, c_n\}$ be a Jordan frame in a hyperbolic system of degree $n$ and $D \coloneq [d_{ij}]$ be an $n\times n$ real matrix. Consider the linear transformation   $T : \V \to \V$ given by
	\[ T(x) = \sum_{i=1}^{n} \sum_{j=1}^{n} d_{ij} \ip{x}{c_j} c_i. \]
	Since $\{c_1, c_2, \ldots, c_n\}$ is an orthonormal set, we have $D = \big[\! \ip{c_i}{T(c_j)} \!\big]$. Now Theorem \ref{T_A is DS}, together with the above remark, shows that $D$ is doubly stochastic if and only if $T$ is doubly stochastic. 
\end{example}

Finally, by specializing Theorem \ref{T_A is DS}, we state an analog of Schur's theorem in the setting of a hyperbolic system $(\V, p, e)$.

\begin{example} \label{Schur theorem}
	Consider a Jordan frame $\{c_1, c_2, \ldots, c_n\}$ in $\V$. Then the linear transformation 
	\begin{equation} \label{schur transformation} 
		\diag(x) \coloneq \ip{x}{c_1} c_1 + \ip{x}{c_2} c_2 + \cdots + \ip{x}{c_n} c_n\quad (x\in \V) 
	\end{equation}
	is doubly stochastic and $\lambda(\diag(x)) \prec \lambda(x)$.
\end{example}

Motivated by a result in the setting of Euclidean Jordan algebras, see the Introduction, we pose the following 

\gap
\noindent \textbf{Problem:} Given a Jordan frame $\{c_1, c_2, \ldots, c_n\}$ and an $e$-doubly stochastic $n$-tuple $\bfA = \big[ a_1, a_2, \ldots, a_n \big]$, we know that the transformation $T$ defined by $T(x)=\sum_{i=1}^{n} \ip{x}{a_i} c_i$ is doubly stochastic and $\lambda(T(x))\prec \lambda(x)$.  Now consider the transformation $S : \V \to \V$ given by 
\[ S(x) \coloneq \sum_{i=1}^{n} \ip{x}{c_i} a_i \quad (x \in \V). \]
As $\ip{T(x)}{y} = \ip{x}{S(y)}$, we can think of $S$ as the `adjoint' of $T$ relative to the (semi-)inner product in our hyperbolic system. It is easy to show that $S$ is doubly stochastic. Can we say that $\lambda (S(x)) \prec \lambda(x)$ for all $x \in \V$?

\gap
We end this paper with the following remark/result on the characterization of a system-automorphism expressed in terms of doubly stochastic transformations. This can be seen via an application of Corollary \ref{system-auto characterization}. 

\begin{remark} \label{system-auto characterization 2}
	Suppose $(\V,p,e)$ is complete and admits a scaled Jordan frame (or, more generally, $p$ is a minimal polynomial). Then the following are equivalent for a linear transformation $A: \V \to \V$:
	\begin{itemize}
		\item[$(i)$] $A$ is a system-automorphism.
		\item[$(ii)$] $A$ is a doubly stochastic cone-automorphism.
		\item[$(iii)$] $A$ is invertible and both $A$ and $A^{-1}$ are doubly stochastic.
	\end{itemize}
\end{remark}

\section*{Acknowledgment}

The work of Juyoung Jeong was supported by the National Research Foundation of Korea (NRF) grant funded by the Ministry of Science and ICT (No.\,RS-2026-25475143), and by the Global-LAMP Program of the NRF grant funded by the Ministry of Education (No.\,RS-2025-25441317).


    
\end{document}